\title{Toric degenerations of Calabi--Yau complete intersections and metric SYZ conjecture}
\author{Keita Goto}
\address{
Department of Mathematics, National Taiwan University,
 No.~1, Sec.~4, Roosevelt Rd, Da’an District, Taipei City 10617, 
Taiwan
}
\email{k.goto.math@gmail.com}
\author{Yuto Yamamoto}
\address{
RIKEN iTHEMS, Wako, Saitama 351-0198, Japan}
\email{yuto.yamamoto@riken.jp}
\date{}
\begin{document}
\begin{abstract} 
We consider a toric degeneration 
$\scX$
of Calabi--Yau complete intersections of Batyrev--Borisov in the Gross--Siebert program. 
For the toric degeneration $\scX$, we study the real  Monge--Amp\`{e}re equation corresponding to the non-archimedean Monge--Amp\`{e}re equation that yields the non-archimedean Calabi--Yau metric.
Our main theorem describes the real Monge--Amp\`{e}re equation in terms of tropical geometry and proves the metric SYZ conjecture for
the toric degeneration 
$\scX$ supposing the existence of its solution.
\end{abstract}

\maketitle

\section{Introduction}

The SYZ conjecture predicts that a Calabi--Yau manifold admits the structure of a special Lagrangian torus fibration, and the mirror Calabi--Yau manifold is obtained by taking its dual fibration \cite{MR1429831}.
Its weaker version called the \emph{metric SYZ conjecture} claims as follows:

\begin{conjecture}[{cf.~\cite[Conjecture 1.1]{MR4688155}}]\label{cj:syz}
    Let $\scX=\lc \scX_t \rc_t$ be a maximally degenerate family of polarized Calabi--Yau manifolds of dimension $d$ over the punctured disc $\mathbb{D}^\ast:=\{t\in \bC^\times \ |\ |t| < 1 \}$.
    Then for any $0<\varepsilon \ll 1$, there exists $\delta>0$ such that if $|t|<\delta$ then there exists a special Lagrangian $\bT^d$-fibration on an open subset of the fiber $\scX_t$, whose normalized Calabi--Yau volume is greater than or equal to $1-\varepsilon$.
\end{conjecture}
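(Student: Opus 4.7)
The plan is to follow the non-archimedean pluripotential approach pioneered by Li in the hypersurface case, adapted to Batyrev--Borisov complete intersections through the tropical reformulation announced in the abstract. The overall scheme is to pass from $\scX_t$ to the Berkovich analytification $\scX^{\mathrm{an}}$, then from the non-archimedean Monge--Amp\`ere equation to a real Monge--Amp\`ere equation on the essential skeleton $\mathrm{Sk}(\scX)$, and finally to build a semi-flat model whose special Lagrangian torus fibration approximates the genuine Calabi--Yau geometry on $\scX_t$.

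First I would identify the non-archimedean Calabi--Yau metric $\phi^{\mathrm{NA}}$ on $\scX^{\mathrm{an}}$ (existence by Boucksom--Favre--Jonsson and Yuan--Zhang) and restrict it to $\mathrm{Sk}(\scX)$; on the smooth locus $B := \mathrm{Sk}(\scX)\setminus \Gamma$ of the induced integral affine structure the restriction satisfies a real Monge--Amp\`ere equation which, using the Batyrev--Borisov data (the dual pair of reflexive polytopes together with the nef-partition), can be rewritten tropically with both sides explicit polyhedral objects. This is the main theorem advertised in the abstract; assuming its solution $\phi$, I would then on each compact $K\Subset B$ construct a Lagrangian $\bT^d$-fibration $\mu_t\colon U_t\to K$ inside $\scX_t$ together with the semi-flat Calabi--Yau metric $\omega_{\mathrm{SF},t}$ built from $\phi$ via the Gross--Siebert topological model, for which $\mu_t$ is automatically special Lagrangian. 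A comparison argument in the style of Li---using Boucksom--Jonsson convergence of Calabi--Yau measures to the tropical measure determined by $\phi$---should yield $C^0$-closeness of the genuine Ricci-flat metric $\omega_{\mathrm{CY},t}$ to $\omega_{\mathrm{SF},t}$ on $U_t$, and a small implicit-function perturbation then produces a genuine special Lagrangian fibration for $\omega_{\mathrm{CY},t}$ over $K$. Because $\Gamma$ has real codimension at least two in $\mathrm{Sk}(\scX)$, exhausting $B$ by such $K$ and letting $|t|\to 0$ forces the normalized Calabi--Yau volume of $U_t$ above any prescribed $1-\varepsilon$.

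The main obstacle will be the $C^0$-comparison up to the boundary of $K$: in the Batyrev--Borisov setting $\Gamma$ carries genuinely higher-codimensional strata coming from the combinatorics of the nef-partition rather than the simpler hypersurface stratification, so uniform estimates on $\phi$ and on the approximation error near $\Gamma$ will demand a careful tropical-to-analytic dictionary that does not appear in Li's hypersurface argument. A secondary difficulty is showing that, as $K$ exhausts $B$, the mass of the Calabi--Yau measures does not concentrate along $\Gamma$; this amounts to a regularity statement for $\phi$ at the discriminant which, although natural from the tropical viewpoint, is nontrivial to prove.
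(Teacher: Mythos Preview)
The statement you are attempting to prove is \pref{cj:syz}, which is a \emph{conjecture}, not a theorem. The paper does not prove it. It is stated as background from \cite{MR4688155}, and the paper's contribution is to verify special cases of it (Corollaries~\ref{cr:Li} and~\ref{cr:AH}) by establishing, for certain toric degenerations of Batyrev--Borisov complete intersections, the NA MA-real MA comparison property---and then invoking Li's reduction \cite{MR4688155} together with existence results for the real Monge--Amp\`ere equation from \cite{MR4701493} or \cite{AH23}. There is no proof of \pref{cj:syz} in the paper for you to compare against.

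Your outline is a reasonable summary of Li's general strategy, but it is not a proof: the existence of a solution to the real Monge--Amp\`ere equation on the skeleton is precisely the open problem, and you assume it. The ``$C^0$-comparison'' and ``implicit-function perturbation'' steps you allude to are the content of Li's theorem reducing \pref{cj:syz} to the comparison property, which the paper cites rather than reproves. The genuine work in this paper is far more restricted in scope: identify the skeleton with the tropical sphere $B_{\check{h}}$, build a suitable snc model $\scrX$ via repeated blow-ups (since the toric degeneration itself need not be snc at the relevant strata in the complete-intersection case), show that the potential of a toric metric is constant along fibers of the Berkovich retraction (\pref{lm:potential-retraction}), and compute the intersection number $(L_h^d)$ tropically as $d!\,\mu(A_h)$ (\pref{lm:vol}). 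None of this touches the analytic perturbation arguments you sketch, and none of it proves \pref{cj:syz} in general.
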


The metric SYZ conjecture was first proved in \cite{MR4460593} for (a subsequence of) the Dwork family.
It was also proved by Li \cite{MR4688155} that the metric SYZ conjecture is reduced to showing a certain condition called 
the \emph{NA MA-real MA comparison property}.
It requires that there exists an snc-model such that the potential of a solution to the non-archimedean Monge--Amp\`{e}re equation corresponding to the complex Monge--Amp\`{e}re equation giving the Calabi--Yau metric is constant along fibers of the associated Berkovich retraction (cf.~\cite[Definition 3.3]{MR4688155}).
The NA MA-real MA comparison property was confirmed for families of Calabi--Yau hypersurfaces in a projective space
in \cite{MR4692438, PS22}, for families of Calabi--Yau hypersurfaces in certain smooth toric Fano varieties in \cite{MR4701493},
and for families of polarized abelian varieties in \cite{MR4692382}.

The degenerate families of Calabi--Yau manifolds in \cite{MR4692438, PS22, MR4701493}, for which the NA MA-real MA comparison property is confirmed are toric degenerations (in the sense of the Gross--Siebert program \cite{MR2213573, MR2669728, MR2846484}) of Calabi--Yau hypersurfaces that degenerate to the toric boundaries of the ambient smooth toric varieties.
In this article, we study the metric SYZ conjecture for more general toric degenerations.
To be specific, we consider toric degenerations of Calabi--Yau complete intersections of Batyrev--Borisov \cite{MR1463173}, whose special fibers may not be the toric boundaries of the ambient toric varieties.

\subsection{Main result}\label{sc:main-result}

Let $d$ and $r$ be positive integers, and $M$ be a free $\bZ$-module of rank $d+r$.
We set $N:=\Hom(M, \bZ), M_\bR:=M \otimes_\bZ \bR$, and $N_\bR:=N \otimes_\bZ \bR$.
We have the natural pairing
\begin{align}
\la -,- \ra \colon M_\mathbb{R} \times N_\mathbb{R}\to \mathbb{R}.
\end{align}
Let $\Delta \subset M_\bR$ be a reflexive polytope, and $\Delta^\ast := \lc n \in N_\bR \relmid \la \Delta, n \ra \geq -1 \rc$ be its polar polytope.
Let further $\Delta =\Delta_1+ \cdots + \Delta_r$ be a nef partition of $\Delta$, and $\nabla=\nabla_1 + \cdots + \nabla_r \subset N_\bR$ be its dual nef partition (cf.~\cite[Section 4]{MR1463173}).
We take coherent subdivisions $\Sigma' \subset N_\bR, \Sigmav' \subset M_\bR$ of the normal fans $\Sigma \subset N_\bR, \Sigmav \subset M_\bR$ of $\Delta, \nabla$ so that the fan polytopes remain to be $\Delta^\ast, \nabla^\ast$.
We further consider integral piecewise linear functions 
\begin{align}\label{eq:hh}
h \colon N_\bR \to \bR, \quad \check{h} \colon M_\bR \to \bR,
\end{align}
which are strictly convex on the fans $\Sigma'$ and $\Sigmav'$ respectively.
With these data, Gross \cite{MR2198802} constructed a toric degeneration $\mathcal{X}$ of $d$-dimensional Calabi--Yau complete intersections over the affine line $\mathbb{A}^1:=\Spec \bC \ld t \rd$, and its dual intersection complex $B_\nabla^{\check{h}}$.
The precise construction will be recalled in \pref{sc:setting}.
Let $K:=\bC \lbb t \rbb)$.
The ambient space of the generic fiber $X:=\mathcal{X}\times_{\mathbb{A}^1}K$ the toric degeneration $\scX$ is the toric variety $X_{\Sigma'}$ over $K$ associated with the fan $\Sigma'$.
Let $L_h$ be the line bundle on $X_{\Sigma'}$ associated to the integral piecewise linear function $h$ of \eqref{eq:hh}.
By abuse of notation, we also write its restriction to $X$ as $L_h$.

The integral piecewise linear functions \eqref{eq:hh} also give rise to the tropical spheres $A_h\subset M_\bR, B_{\check{h}}\subset N_\bR$ of \cite{MR2187503}.
The definition will be recalled in \pref{sc:sphere}.
They are polytopal complexes consisting of $d$-dimensional polytopes and their faces, and have the natural Lebesgue measures $\mu, \nu$ respectively.
We set the normalized Lebesgue measure 
\begin{align}\label{eq:normalized}
\tilde{\nu}:=\frac{\nu}{\nu(B_{\check{h}})}
\end{align}
on $B_{\check{h}}$.
Let $(X^{\an}, L_h^{\an})$ be the analytification of $(X,L_h)$ over the discrete valued field $K$. 
The essential skeleton $\Sk (X)\subset X^{\an}$ of $X$ (cf.~\cite[Definition 4.10]{MR3370127}) is identified with the dual intersection complex $B_\nabla^{\check{h}}$ of the toric degeneration $\scX$ by \cite[Theorem 1.1(1)]{Yam24}, and it is further equal to the tropical sphere $B_{\check{h}}$ by \cite[Proposition 5.4]{Yam21}.

The non-archimedean Monge--Amp\`{e}re equation we would like to consider is written as
\begin{align}\label{eq:NA-MA}
  {\rm MA} (\Phi) = (L_h^d)\tilde{\nu}  
\end{align}
for semi-positive metrics $\Phi$ on $L_h^{\an}$, where the left hand side denotes the non-archimedean Monge--Amp\`{e}re measure of the metric $\Phi$ (cf.~\cite[{D\'{e}finition 5.6.7}]{CLD12}).
We define $\scP\lb \Delta_h \rb$ to be the set of convex functions $\phi \colon N_\bR \to \bR$ such that
\begin{align}\label{eq:P-delta}
\sup_{n \in N_\bR} \left| \phi(n)-\phi_h(n) \right| < \infty,
\end{align}
where $\phi_{h} \colon N_\bR \to \bR$ is the support function of 
$\Delta_h:=\lc m \in M_\bR \relmid \la m, n \ra \geq -h(n), \forall n \in N_\bR \rc$ defined by
\begin{align}
    \phi_{h}(n):=-\inf_{m \in \Delta_h \cap M} \la m, n \ra.
\end{align}
There is a bijection between $\scP\lb \Delta_h \rb$ and the space of semi-positive toric metrics on $L_h^{\an}$ by \cite[Theorem 4.8.1(1)]{MR3222615}.
The main result of this article is the following: 

\begin{theorem}\label{th:main}
The following hold:
\begin{enumerate}
    \item 
The following are equivalent:
\begin{enumerate}
\item[(i)] There exists a convex function $\phi \in \scP\lb \Delta_h \rb$
such that, for any facet $\tau$ of $B_{\check{h}}$, the convex function $\phi$ satisfies the real Monge--Amp\`{e}re equation
    \begin{align}\label{eq:real-MA}
        \MA_\bR \lb \left. \phi \right|_{\rint (\tau)}\rb
        = \mu \lb A_h \rb \tilde{\nu}|_{\rint (\tau)},
    \end{align}
    where $\rint (\tau)$ is the relative interior of $\tau$ and
    $\MA_\bR$ is the real Monge--Amp\`{e}re operator defined for convex functions on $\rint (\tau)$ (cf.~\cite[Section 2.5]{MR4688155}).
     \item[(ii)] The solution of the non-archimedean Monge--Amp\`{e}re equation \eqref{eq:NA-MA} is given by the semi-positive toric metric on $L_h^{\an}$ induced by a convex function $\phi \in \scP\lb \Delta_h \rb$.
\end{enumerate}
\item If either (i) or (ii) holds, then the NA MA-real MA comparison property holds for $(X^{\an},L_h^{\an})$, which implies the metric SYZ conjecture (\pref{cj:syz}) for the toric degeneration 
$\scX |_{\mathbb{D}^*}$.
\end{enumerate}
\end{theorem}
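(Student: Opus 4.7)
The plan is to reduce part (1) to an explicit computation of the
non-archimedean Monge--Amp\`ere measure of a semi-positive toric metric on
\(L_h^{\an}\), and to deduce part (2) from part (1) via Li's theorem
\cite{MR4688155} using the observation that the potential of a toric metric
factors through the tropicalization.

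For part (1), write \(\Phi_\phi\) for the semi-positive toric metric on
\(L_h^{\an}\) corresponding to \(\phi \in \scP(\Delta_h)\) under the bijection
of \cite[Theorem 4.8.1(1)]{MR3222615}. I would first compute
\({\rm MA}(\Phi_\phi)\) on the ambient \(X_{\Sigma'}^{\an}\) using the
Chambert-Loir--Ducros calculus \cite{CLD12} together with the toric formulas
of \cite{MR3222615}: there the measure is supported on the toric skeleton
\(N_\bR\) and is proportional to \(\MA_\bR(\phi)\). To pass to
\(X \subset X_{\Sigma'}\), I would exploit the Batyrev--Borisov nef partition
structure --- \(X\) is cut out by \(r\) sections whose tropicalizations
intersect (stably) in the union of facets of \(B_{\check{h}}\) --- and
apply tropical intersection theory to derive an identity of the form
\[
    \left. {\rm MA}(\Phi_\phi) \right|_{\rint(\tau)}
    = c \cdot \MA_\bR\!\left(\phi|_{\rint(\tau)}\right)
\]
on each facet \(\tau\), where the constant \(c\) is determined by the
tropical multiplicities at \(\tau\). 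Combining this with a global degree
identity of the form \((L_h^d) = \mu(A_h)\cdot \nu(B_{\check{h}})\) (itself
a consequence of tropical intersection theory for Calabi--Yau complete
intersections), the non-archimedean equation \eqref{eq:NA-MA} restricted to
\(\rint(\tau)\) is converted into the real equation \eqref{eq:real-MA},
giving the equivalence \((i) \Leftrightarrow (ii)\). The containment
\({\rm supp}\,{\rm MA}(\Phi_\phi) \subset B_{\check{h}}\) is built into the
identification \(\Sk(X) = B_{\check{h}}\) of \cite{Yam21, Yam24}.

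For part (2), assume (ii), so the non-archimedean solution is a toric metric
\(\Phi_\phi\). The potential of any toric metric depends on a point of
\(X^{\an}\) only through its tropicalization in \(N_\bR\). In the present
Gross--Siebert setting, the Berkovich retraction to
\(\Sk(X) = B_{\check{h}} \subset N_\bR\) is compatible with the tropicalization
(via the identifications of \cite{Yam21, Yam24}), so
\(\phi \circ {\rm trop}\) is automatically constant along retraction fibers.
This is precisely the NA MA--real MA comparison property of
\cite[Definition 3.3]{MR4688155}, and Li's theorem then yields the metric
SYZ conjecture (\pref{cj:syz}) for \(\scX|_{\mathbb{D}^\ast}\).

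The principal technical obstacle is the explicit computation of
\({\rm MA}(\Phi_\phi)\) on \(X^{\an}\) itself (rather than on the ambient
\(X_{\Sigma'}^{\an}\)). This requires a careful tropical intersection
calculation: one has to determine the multiplicity \(c\) on each facet of
\(B_{\check{h}}\) uniformly, to verify that contributions at lower-dimensional
cells --- where the tropical hypersurfaces coming from the nef partition can
intersect non-transversally --- either vanish or are absorbed into the facet
measures, and to match the normalization with the global degree \((L_h^d)\).
The secondary subtlety is the rigorous identification of the Berkovich
retraction with the tropicalization on the toric part, which should follow
from \cite{Yam21, Yam24} but must be phrased so that Li's comparison
property is literally applicable.
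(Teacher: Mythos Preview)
Your high-level plan is in the right spirit, but it diverges from the paper's argument in two essential ways, and one of these divergences is a genuine gap.

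\textbf{Part (1): the restriction problem.} You propose to compute $\mathrm{MA}(\Phi_\phi)$ on the ambient $X_{\Sigma'}^{\an}$ and then pass to $X^{\an}$ via tropical intersection theory. There is no such restriction formula in the literature: the non-archimedean Monge--Amp\`ere measure of a metric restricted to a closed subvariety is \emph{not} obtained by intersecting the ambient measure with the tropicalization of the subvariety, and you yourself flag this as the ``principal technical obstacle''. The paper avoids this entirely. It works directly on $X^{\an}$ by invoking Vilsmeier's comparison theorem \cite[Theorem 1.1]{MR4204708}, which says that if the potential $\psi$ of a metric factors through the Berkovich retraction associated with an snc-model, then on each facet $\tau$ of the skeleton one has $\mathrm{MA}(\Phi)|_{\rho^{-1}(\mathrm{rint}(\tau))} = d!\cdot \mathrm{MA}_\bR(\psi|_{\mathrm{rint}(\tau)})$. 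The hypothesis of Vilsmeier's theorem is exactly the content of \pref{lm:potential-retraction}(1), and \pref{lm:potential-retraction}(2) then lets one replace $\psi$ by $\phi$ up to an integral affine function (which does not change $\mathrm{MA}_\bR$). So the paper's proof of Part (1) \emph{presupposes} the technical lemmas proved for Part (2); the two parts are not independent.

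Relatedly, your proposed degree identity $(L_h^d) = \mu(A_h)\cdot\nu(B_{\check h})$ is not the right one. The paper proves $(L_h^d) = d!\cdot\mu(A_h)$ (\pref{lm:vol}) by an inclusion--exclusion / Ehrhart argument on the complete intersection; the factor is $d!$, coming from the $d!$ in Vilsmeier's formula, not $\nu(B_{\check h})$. A total-mass comparison then forces all of $\mathrm{MA}(\Phi)$ onto the facet interiors, ruling out contributions from lower-dimensional cells without any separate tropical transversality analysis.

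\textbf{Part (2): the model.} You are right that the potential of a toric metric depends only on the tropicalization, and that this should give constancy along retraction fibers. But the comparison property of \cite[Definition 3.3]{MR4688155} demands an snc-model, and the toric degeneration $\scX$ is typically \emph{not} snc in this generality (unlike the hypersurface-in-smooth-Fano case treated in \cite{MR4692438, MR4701493}). The paper uses the minimal snc-model $\scrX$ built in \cite{Yam24} by iterated blow-ups of $\scX$ along components of the central fiber, and then proves by an explicit local calculation on this blown-up model (\pref{lm:trop-rho}) that $\trop = \trop\circ\rho_{\scrX}$ on preimages of facet interiors. This is not a formal consequence of the identifications in \cite{Yam21, Yam24}; it requires tracking how the monomials transform under the blow-ups. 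Your ``secondary subtlety'' is in fact the main technical content of Section~\ref{sc:proof2}.
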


There are some conditions that we impose for \pref{th:main} but do not mention here, such as \pref{cd:add} and \pref{cd:add2}.
They will be stated precisely in \pref{sc:setting}.

\pref{th:main} is a generalization of a part of \cite{MR4692438, MR4701493}.
Let $\varphi \colon N_\bR \to \bR, \check{\varphi} \colon M_\bR \to \bR$ be the convex piecewise linear functions that correspond to the anti-canonical sheaves on the toric varieties associated with the fans $\Sigma, \Sigmav$ respectively.
In \cite[Section 3]{MR4701493}, \pref{th:main} is shown under the additional assumption that $r=1$, the reflexive polytope $\Delta$ is Delzant, and $\check{h}=\check{\varphi}$.
The same thing is proved earlier in \cite{MR4692438}, supposing further that the ambient smooth toric variety is the projective space.

By applying \cite[Theorem 2.31]{MR4701493}, we obtain the following generalization of \cite[Theorem 1.2]{MR4701493}:

\begin{corollary}\label{cr:Li}
    The metric SYZ conjecture holds for the family $\lc \scX_t\rc_t$ under the assumption that
    \begin{itemize}
    \item $r=1$ and $\Delta$ is a reflexive Delzant polytope (this implies $\Sigma'=\Sigma$),
    \item $\Sigmav'=\Sigmav$,
    \item \pref{cd:add2} stated in \pref{sc:setting} holds, and
    \item the condition $1$ or $2$ stated in \cite[Theorem 2.31]{MR4701493} is satisfied.
    \end{itemize}
\end{corollary}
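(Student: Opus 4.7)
The plan is to deduce \pref{cr:Li} directly from \pref{th:main}(2) by verifying condition (i) of \pref{th:main}(1) through an application of Li's existence theorem \cite[Theorem 2.31]{MR4701493}. Once a convex function $\phi \in \scP\lb \Delta_h \rb$ satisfying \eqref{eq:real-MA} on every facet of $B_{\check{h}}$ is produced, \pref{th:main}(2) immediately yields the NA MA-real MA comparison property for $(X^{\an}, L_h^{\an})$, and hence the metric SYZ conjecture for $\scX|_{\mathbb{D}^\ast}$.

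First, I would observe that under the hypotheses $r=1$, $\Delta$ reflexive Delzant, and $\Sigmav' = \Sigmav$, the ambient toric variety $X_{\Sigma'}=X_\Sigma$ is smooth Fano and $\scX$ is a Batyrev-type toric degeneration of an anticanonical Calabi--Yau hypersurface. In this setting the essential skeleton $B_{\check{h}}$ admits the explicit polytopal description of \cite{MR2187503}, and its facet stratification matches the combinatorial objects on which \cite[Theorem 2.31]{MR4701493} is formulated. This is what makes it meaningful to compare the two real Monge--Amp\`ere problems facet-by-facet.

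Second, I would carefully unpack Li's theorem: under either condition (1) or (2) of \cite[Theorem 2.31]{MR4701493}, together with \pref{cd:add2}, it produces a convex function on $N_\bR$, at bounded distance from a support function, that solves a facet-wise real Monge--Amp\`ere equation on the essential skeleton with a prescribed right-hand side measure. The key compatibility check is that this measure, read through the identification of the essential skeleton with $B_{\check{h}}$, coincides with $\mu(A_h)\,\tilde{\nu}|_{\rint(\tau)}$ on every facet $\tau$, and that the growth condition built into Li's function space is equivalent to membership in $\scP\lb \Delta_h \rb$ via \eqref{eq:P-delta}. Once these are verified, condition (i) of \pref{th:main}(1) holds and the corollary follows.

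The main obstacle I anticipate is precisely this measure identification. Li's statement is formulated intrinsically in terms of the Delzant reflexive polytope and its combinatorics, whereas \eqref{eq:real-MA} is expressed through the tropical spheres $A_h$, $B_{\check{h}}$ of \cite{MR2187503} attached to the piecewise linear data $h, \check{h}$. Ensuring that the global normalization $\mu(A_h)$ and the facet-wise Lebesgue measures $\tilde{\nu}|_{\rint(\tau)}$ match Li's normalization exactly, rather than merely agreeing up to an uncontrolled facet-dependent scalar, is where \pref{cd:add2} must be used decisively. Once this bookkeeping is in place, the remainder of the argument is a formal application of \pref{th:main}.
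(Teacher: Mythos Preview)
Your approach is correct and matches the paper's, which simply states that the corollary follows by applying \cite[Theorem 2.31]{MR4701493} to obtain condition (i) of \pref{th:main}(1) and then invoking \pref{th:main}(2). One small misattribution: \pref{cd:add2} is a standing hypothesis needed for \pref{th:main} itself (it provides the polarized model via $\tilde{h}$), not an ingredient used ``decisively'' in the facet-wise measure identification; the latter is a direct matching of Li's setup with the description of $A_h$ and $B_{\check{h}}$ under $r=1$, $\Sigma'=\Sigma$, $\Sigmav'=\Sigmav$.
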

Similarly, by applying \cite[Theorem 1]{AH23}, we obtain the following generalization of \cite[Corollary 1]{AH23}:

\begin{corollary}\label{cr:AH}
    The metric SYZ conjecture holds for the family $\lc \scX_t\rc_t$ under the assumption that
    \begin{itemize}
    \item $r=1$ and $\Delta$ is a reflexive Delzant polytope (this implies $\Sigma'=\Sigma$),
    \item $h=\varphi$, i.e., the polarization is anticanonical, and
    \item $(\Delta, \check{h})$ is stable in the sense of \cite[Definition 1, Remark 2]{AH23}.
    \end{itemize}
\end{corollary}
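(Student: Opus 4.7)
The plan is to reduce \pref{cr:AH} to \pref{th:main}(2), which means verifying condition (i) of \pref{th:main}(1) under the stated hypotheses and then invoking \pref{th:main}(2).

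First I would unpack the setup. Under $r=1$ and $\Delta$ reflexive Delzant, we have $\Sigma' = \Sigma$, so $X_{\Sigma'}$ is a smooth toric Fano variety. Taking $h = \varphi$ identifies $L_h$ with the restriction to $X$ of the anticanonical line bundle and $\Delta_h$ with the reflexive polytope $\Delta$ (up to a translation by an interior integral point), so that $\scP(\Delta_h)$ becomes the space of bounded convex perturbations of the support function $\phi_h = \phi_\varphi$.

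Next, I would set up a dictionary between the real Monge--Amp\`{e}re equation \eqref{eq:real-MA} on each facet of $B_{\check h}$ and the equation solved in \cite[Theorem 1]{AH23}. Since $r=1$, the tropical sphere $B_{\check h}$ is the polytopal subdivision of $\partial \nabla^\ast$ induced by $\check h$, and its facets are in bijection with the maximal cells of that subdivision. The convex function $\phi \in \scP(\Delta_h)$ in (i) restricts on each facet $\tau$ to a function whose Lebesgue Monge--Amp\`{e}re measure is to equal $\mu(A_h)\tilde{\nu}|_{\rint(\tau)}$; matching the total mass uses $\nu(B_{\check h}) = (L_h^d)/\mu(A_h)$ so that $\mu(A_h)\tilde{\nu}$ has total mass $(L_h^d)$, which is precisely the normalization in the Alekseev--Hultgren framework under the anticanonical polarization $h = \varphi$. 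The stability condition of \cite[Definition 1, Remark 2]{AH23} is by construction the pointwise/combinatorial condition that guarantees solvability of this facet-by-facet problem.

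Then I would apply \cite[Theorem 1]{AH23}: stability of $(\Delta, \check{h})$ yields a convex function $\phi_0$ on $B_{\check h}$ satisfying \eqref{eq:real-MA} on every facet. Extending $\phi_0$ to all of $N_\bR$ by $\phi_h$ away from a neighborhood of $B_{\check h}$, in the standard toric fashion that preserves both global convexity and the bound \eqref{eq:P-delta}, produces $\phi \in \scP(\Delta_h)$ verifying (i). \pref{th:main}(2) then furnishes the NA MA--real MA comparison property for $(X^{\an}, L_h^{\an})$ and hence the metric SYZ conjecture for $\scX|_{\mathbb{D}^\ast}$.

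The main obstacle is the identification step: one must carefully check that the polytopal data, the measure normalization, and the notion of stability in \cite{AH23} match our $B_{\check h}$, our measure $\mu(A_h)\tilde{\nu}$, and our $(\Delta, \check h)$ under the hypothesis $h = \varphi$, and that the facetwise solutions can be assembled into a single element of $\scP(\Delta_h)$ without destroying convexity across codimension-one strata of $B_{\check h}$.
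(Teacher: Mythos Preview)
Your approach is essentially the paper's: invoke \cite[Theorem~1]{AH23} to produce a solution of the real Monge--Amp\`ere equation, i.e.\ condition~(i) of \pref{th:main}(1), and then apply \pref{th:main}(2). Two remarks, though.

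First, there is a small gap: \pref{th:main} is stated under the standing hypotheses of \pref{sc:setting}, which include \pref{cd:add2} (the existence of a strictly convex integral piecewise linear $\tilde h$ on $\tilde\Sigma'$ restricting to $h$ on $N_\bR\oplus\{0\}$). You never say why this holds in the situation of \pref{cr:AH}. The paper does address this explicitly (see the Remark following \pref{cr:AH}): since $h=\varphi$, the function $\tilde\varphi$ of \eqref{eq:tvp} already plays the role of $\tilde h$, so \pref{cd:add2} is automatic and need not be assumed. You should include this check.

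Second, your ``main obstacle'' is largely a non-issue. In \cite{AH23} the equivalence between stability and solvability is formulated for a global convex function on $N_\bR$ in the class $\scP(\Delta)$ (their setup is exactly $r=1$, $h=\varphi$), so the output of \cite[Theorem~1]{AH23} is already an element of $\scP(\Delta_h)$ satisfying \eqref{eq:real-MA} on every facet; no separate extension from $B_{\check h}$ to $N_\bR$ or gluing across codimension-one strata is required. The dictionary you sketch (matching $B_{\check h}$, the measures, and the normalizations) is indeed what one must verify, but it is a direct unwinding of definitions rather than an obstacle.
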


The condition of being stable of \cite{AH23} is formulated in terms of optimal transport plans in the case of hypersurfaces, and is equivalent to the existence of a solution to the real Monge--Amp\`{e}re equation \cite[Theorem 1]{AH23}.
It has also been proved that some examples that have enough symmetries such as the reflexive simplex, cube, and Weyl polytopes actually satisfy the stability condition \cite{AH23, DH24}.
For some other sufficient or necessary conditions for the existence of a solution to the real Monge--Amp\`{e}re equation, we refer the reader to \cite{MR4701493} and \cite[Section 9.2]{AH23}.

\begin{remark}
In \pref{cr:AH}, we do not need to impose \pref{cd:add2} in contrast to \pref{cr:Li}, since $h=\varphi$, and the function $\tilde{\varphi}$ of \eqref{eq:tvp} plays the same role as the function $\tilde{h}$ of \pref{cd:add2}.
\end{remark}

Notice that all the preceding works such as \cite{MR4692438, MR4701493, AH23, DH24} proved \pref{cr:Li} and \pref{cr:AH} under the additional assumption $\check{h}=\check{\varphi}$.

\subsection{Strategy of the proof}

The proof of \pref{th:main} is basically based on the same strategy as \cite{MR4692438} and \cite{MR4701493}.
We compare the non-archimedean/real Monge--Amp\`{e}re measures by using the result of \cite{MR4204708} to show \pref{th:main}(1).
For proving \pref{th:main}(2), i.e., for confirming the NA MA-real MA comparison property, we need a model satisfying the required condition.
Although toric degenerations may not be snc in general,
in the case where the special fiber of a toric degeneration is the toric boundary of the ambient smooth toric variety as in \cite{MR4692438, MR4701493}, we are able to use the toric degeneration itself as the model, 
since it is snc at all the $0$-dimensional strata of the special fiber (cf.~\cite[Remark 3.5]{MR4701493}).
However, this is not the case in the setup of this article.
For this issue, in this article, we instead use a minimal snc-model constructed in \cite{Yam24} by blowing up the toric degeneration along irreducible components of the special fiber repeatedly.

\subsection{Organization of this article}

In \pref{sc:setting}, we recall the construction by Gross \cite{MR2198802} of toric degenerations of Calabi--Yau complete intersections, and state the precise setup of \pref{th:main}.
In \pref{sc:sphere}, we recall the definition of tropical spheres of \cite{MR2187503}.
In \pref{sc:proof2} and \pref{sc:proof1},
we give a proof of \pref{th:main} (2) and (1) respectively.

\section{Toric degenerations of Calabi--Yau complete intersections}\label{sc:setting}

Regarding the sign convention, we follow \cite{MR2198802}.
In particular, the tropicalization map is defined by \eqref{eq:trop} as opposed to \cite{MR4692438, MR4701493, AH23}, and convex functions considered in this article are all lower convex as in loc.cit.

Let $d$ and $r$ be positive integers, and $M$ be a free $\bZ$-module of rank $d+r$.
We set $N:=\Hom(M, \bZ), M_\bR:=M \otimes_\bZ \bR$, and $N_\bR:=N \otimes_\bZ \bR$.
Let $\Delta \subset M_\bR$ be a reflexive polytope, and $\Delta^\ast := \lc n \in N_\bR \relmid \la \Delta, n \ra \geq -1 \rc$ be its polar polytope.
Let further $\Sigma \subset N_\bR$ be the normal fan of $\Delta$.
Consider the convex piecewise linear function $\varphi \colon N_\bR \to \bR$ that corresponds to the anti-canonical sheaf on the toric variety associated with the fan $\Sigma$.
Let $\lc e_1, \cdots, e_l \rc$ be the set of primitive generators of one-dimensional cones of the fan $\Sigma$.
Then we have $\varphi(e_i)=1$ for any $i \in \lc 1, \cdots, l \rc$.
A Minkowski decomposition $\Delta =\Delta_1+ \cdots + \Delta_r$ is called a \emph{nef-partition} if the induced decomposition $\varphi = \varphi_1 + \cdots + \varphi_r$ satisfies $\varphi_i(e_j) \in \lc 0, 1\rc$ for any $i \in \lc 1, \cdots, r\rc$ and $j \in \lc 1, \cdots, l\rc$.
Let $\nabla_i \subset N_\bR$ be the convex hull of $0 \in N_\bR$ and all $e_j$ such that $\varphi_i(e_j) =1$.
Then the Minkowski sum $\nabla:=\nabla_1 + \cdots + \nabla_r$ becomes a reflexive polytope.
We also write its polar polytope as $\nabla^\ast \subset M_\bR$.
Let $\Sigmav \subset M_\bR$ be the normal fan of $\nabla$, and $\varphiv \colon M_\bR \to \bR$ be the piecewise linear function corresponding to the anti-canonical sheaf on the toric variety associated with the fan $\Sigmav$.
We also write the decomposition of $\varphiv$ induced by $\nabla=\nabla_1 + \cdots + \nabla_r$ as $\varphiv = \varphiv_1 + \cdots + \varphiv_r$.
For a lattice polytope $\mu \subset \partial \Delta^\ast$, we define
\begin{align}
\beta_i^\ast(\mu):= \lc n \in \mu \relmid \varphi_i(n)=1 \rc \subset \nabla_i \cap \partial \Delta^\ast, \quad \rotatebox[origin=c]{180}{$\beta$} (\mu):=\sum_{i=1}^r \beta_i^\ast(\mu) \subset \nabla.
\end{align}
The subset $\beta_i^\ast(\mu)$ is a face of the lattice polytope $\mu$.

We take coherent subdivisions $\Sigma' \subset N_\bR, \Sigmav' \subset M_\bR$ of the fans $\Sigma, \Sigmav$ whose fan polytopes (i.e., the convex hulls of primitive generators of all one-dimensional cones) remain to be $\Delta^\ast, \nabla^\ast$ respectively, and consider the integral piecewise linear functions $h, \check{h}$ of \eqref{eq:hh}.
We assume that the functions 
\begin{align}
h':=h-\varphi \colon N_\bR \to \bR\\
\check{h}':=\check{h}-\varphiv \colon M_\bR \to \bR
\end{align}
are convex (not necessarily strict convex) on $\Sigma'$ and $\Sigmav'$ respectively.
Let
\begin{align}
\Delta_h&:=\lc m \in M_\bR \relmid \la m, n \ra \geq -h(n), \forall n \in N_\bR \rc \\
\nabla_{\check{h}}&:=\lc n \in N_\bR \relmid \la m, n \ra \geq -\check{h}(m), \forall m \in M_\bR \rc \\
\nabla_{\check{h}'}&:=\lc n \in N_\bR \relmid \la m, n \ra \geq -\check{h}'(m), \forall m \in M_\bR \rc
\end{align}
be the Newton polytopes of $h, \check{h}$, and $\check{h}'$ respectively.
We set
\begin{align}
\scrR^{\check{h}}_{\Delta^\ast}&:= \lc (F_1, F_2) \relmid F_1 \prec \Delta^\ast,  F_2 \prec \nabla^{\check{h}'}\ \mathrm{such\ that}\ \rotatebox[origin=c]{180}{$\beta$} (F_1) \neq \emptyset, F_1+F_2 \prec \Delta^\ast + \nabla^{\check{h}'}\rc,\\
\scrP^{\check{h}}_{\Delta^\ast}&:= \lc \rotatebox[origin=c]{180}{$\beta$} (F_1)+F_2 \relmid (F_1, F_2) \in \scrR^{\check{h}}_{\Delta^\ast} \rc, \\
B^{\check{h}}_\nabla&:=\bigcup_{(F_1, F_2) \in \scrR^{\check{h}}_{\Delta^\ast}} \rotatebox[origin=c]{180}{$\beta$} (F_1)+F_2 \subset N_\bR.
\end{align}
One has $B^{\check{h}}_\nabla \subset \partial \nabla_{\check{h}}$ (\cite[Corollary 3.3]{MR2198802}).
We also consider
\begin{align}\label{eq:t-Delta-i}
\tilde{\Delta}_i:=\lc (m, l) \in M_\bR \oplus \bR \relmid m \in \Delta_i, l \geq \check{h}'(m) \rc, \quad 
\tilde{\Delta}:=\sum_{i=1}^r \tilde{\Delta}_i
\end{align}
and the normal fan $\tilde{\Sigma} \subset N_\bR \oplus \bR$ of $\tilde{\Delta}$.
The fan $\tilde{\Sigma}$ is the union of
\begin{enumerate}
\item $\lc \cone \lb F_1 \rb \times \lc 0 \rc \relmid F_1 \prec \Delta^\ast \rc$,
\item $\lc \cone(F_1) \times \lc 0 \rc + \cone \lb F_2 \times \lc 1 \rc \rb \relmid F_1 \prec \Delta^\ast,  F_2 \prec \nabla^{\check{h}'}, F_1+F_2 \prec \Delta^\ast + \nabla^{\check{h}'} \rc$, and 
\item $\lc \cone \lb F_2 \times \lc 1 \rc \rb \relmid F_2 \prec \nabla^{\check{h}'} \rc$
\end{enumerate}
(\cite[Proposition 3.7]{MR2198802}).
The polytope $\tilde{\Delta}$ determines a line bundle on the toric variety associated with $\tilde{\Sigma}$.
It is induced by the piecewise linear function $\tilde{\varphi}$ on $\tilde{\Sigma}$ defined by
\begin{align}\label{eq:tvp}
\tilde{\varphi} (\tilde{n}):=-\inf_{\tilde{m}\in \tilde{\Delta}} \la \tilde{m},\tilde{n}\ra,
\end{align}
where $\tilde{n} \in N_\bR \oplus \bR$.
The Minkowski decomposition $\tilde{\Delta}=\sum_{i=1}^r \tilde{\Delta}_i$ induces a decomposition $\tilde{\varphi}=\sum_{i=1}^r \tilde{\varphi}_i$ with $\tilde{\varphi}_i \lb \lb n , 0 \rb \rb=\varphi_i(n)$ and $\tilde{\varphi}_i \lb \lb n , 1 \rb \rb=0$ for any $n \in \nabla^{\check{h}'}$.

We take a subdivision $\tilde{\Sigma}'$ of the fan $\tilde{\Sigma} \subset N_\bR \oplus \bR$ so that it satisfies the following condition:
\begin{condition}\label{cd:orig}
The following hold:
\begin{enumerate}
\item The fan $\lc C \cap (N_\bR \oplus \lc 0\rc) \relmid C \in \tilde{\Sigma}' \rc$ coincides with the fan $\Sigma'$.
\item Every $1$-dimensional cone of $\tilde{\Sigma}'$ not contained in $N_\bR \oplus \lc 0\rc$ is generated by a primitive vector $(n, 1)$ with $n \in \nabla^{\check{h}'} \cap N$.
\end{enumerate}
\end{condition}
Such a subdivision $\tilde{\Sigma}'$ is called \emph{good} in \cite[Definition 3.8]{MR2198802}.
We suppose that one can take a good subdivision $\tilde{\Sigma}'$ that also satisfies the following:
\begin{condition}\label{cd:add}
The fan $\tilde{\Sigma}'$ is unimodular, i.e., all the cones are generated by a subset of a basis of the lattice $N \oplus \bZ \subset N_\bR \oplus \bR$.
(In particular, the subfan $\Sigma' \subset \tilde{\Sigma}'$ is required to be unimodular.)
\end{condition}
We also impose the following:

\begin{condition}\label{cd:add2}
One can take a strictly convex integral piecewise linear function 
$\tilde{h} \colon N_\bR \oplus \bR \to \bR$ on $\tilde{\Sigma}'$ whose restriction to $N_{\bR} \oplus \lc 0 \rc$ coincides with $h$.
(Regarding the existence of such a function $\tilde{h}$, we refer the reader to \cite[Definition 3.17, Theorem 3.26]{MR2198802}.)
\end{condition}

We fix such a subdivision $\tilde{\Sigma}'$ and a function $\tilde{h}$ in the following.
The fan $\tilde{\Sigma}'$ consists of the following three sorts of cones (\cite[Observation 3.9]{MR2198802}):
\begin{enumerate}
\item cones of the form $\cone(\mu) \times \lc 0 \rc$ for $\cone(\mu) \in \Sigma'$ with $\mu \subset \partial \Delta^\ast$,
\item cones of the form $\cone(\mu) \times \lc 0 \rc + \cone \lb \nu \times \lc 1 \rc \rb$ where $\cone(\mu) \in \Sigma'$ for $\mu \subset \partial \Delta^\ast,  \nu \subset \partial \nabla^{\check{h}'}$, and $\mu + \nu$ is contained in a face of $\Delta^\ast + \nabla^{\check{h}'}$, and
\item cones contained in $\cone(\nabla^{\check{h}'} \times \lc 1\rc)$.
\end{enumerate}
Cones of the second type with $\rotatebox[origin=c]{180}{$\beta$}(\mu) \neq \emptyset$ are called \textit{relevant}.
We set
\begin{align}
\scrR(\tilde{\Sigma}')&:=\lc (\mu, \nu) \relmid 
\begin{array}{l}
\mu \subset \partial \Delta^\ast, \nu \subset \partial \nabla^{\check{h}'} \\
\cone(\mu) \times \lc 0 \rc + \cone \lb \nu \times \lc 1 \rc \rb \mathrm{\ is\ a\ relevant\ cone\ in\ } \tilde{\Sigma}'
\end{array}
\rc, \\
\scrP(\tilde{\Sigma}')&:=\lc \rotatebox[origin=c]{180}{$\beta$} (\mu) + \nu \relmid (\mu, \nu) \in \scrR(\tilde{\Sigma}') \rc.
\end{align}
Then one has
\begin{align}
B^{\check{h}}_\nabla:=\bigcup_{(\mu, \nu) \in \scrR(\tilde{\Sigma}')} \rotatebox[origin=c]{180}{$\beta$} (\mu) + \nu,
\end{align}
and $\scrP(\tilde{\Sigma}')$ is a polyhedral decomposition of $B^{\check{h}}_\nabla$ \cite[Proposition 3.12, Definition 3.13]{MR2198802}.
In \cite{MR2198802}, we also define an integral affine structure with singularities on $B^{\check{h}}_\nabla$, although we do not use it in this article.

Let $X_{\tilde{\Sigma}'}$ be the toric variety over $\bC$ associated with the fan $\tilde{\Sigma}'$, and $t$ be the regular function on $X_{\tilde{\Sigma}'}$ corresponding to $(0, 1) \in M \oplus \bZ$, which defines the morphism $f \colon X_{\tilde{\Sigma}'} \to \bA^1$.
Let further $\scL_i$ be the line bundle on $X_{\tilde{\Sigma}'}$ associated with the polytope $\tilde{\Delta}_i$, and take a general section $s_i \in \Gamma \lb X_{\tilde{\Sigma}'},  \scL_i \rb$ for every $i \in \lc 1, \cdots, r \rc$.
We consider the closed subscheme $\scX \subset X_{\tilde{\Sigma}'}$ defined by
\begin{align}\label{eq:tss}
ts_1+s_1^0= \cdots =ts_r+s_r^0=0,
\end{align}
where $s_i^0 \in \Gamma \lb X_{\tilde{\Sigma}'},  \scL_i \rb$ is the section defined by $(0, 0) \in \tilde{\Delta}_i$.
By restricting the morphism $f \colon X_{\tilde{\Sigma}'} \to \bA^1$ to $\scX$ and taking the base change to $R:=\bC \ldd t \rdd$, we obtain a morphism $\scX \to \Spec R$.
By abuse of notation, we also write it as $f \colon \scX \to \Spec R$.
This is a toric degeneration of Calabi--Yau varieties \cite[Theorem 3.10]{MR2198802}, and $(B^{\check{h}}_\nabla, \scrP(\tilde{\Sigma}'))$ forms its dual intersection complex \cite[Proposition 3.14]{MR2198802}.
We refer the reader to \cite[Section 4]{MR2213573} for the definitions of toric degenerations and their dual intersection complexes.

Let $K:=\bC \lbb t \rbb$ and $f' \colon X \to \Spec K$ be the base change of the toric degeneration $f \colon \scX \to \Spec R$ to $K$.
The scheme $X$ is a subscheme of the toric variety $X_{\Sigma'}$ over $K$ associated with the fan $\Sigma'$.
Since the fan $\Sigma'$ is unimodular by \pref{cd:add}, the ambient toric variety $X_{\Sigma'}$ is smooth.
By Bertini's theorem, one can see that $X$ is also smooth.
We suppose that $X$ is irreducible.
This is satisfied, for instance, when the nef-partition $\Delta =\Delta_1+ \cdots + \Delta_r$ is $2$-independent in the sense of \cite[Definition 3.1]{MR1463173} (cf.~\cite[Theorem 3.3(ii)]{MR1463173}).
We also give $X$ the polarization obtained by restricting the ample line bundle on the toric variety $X_{\Sigma'}$ associated with the strictly convex function $h$ of \eqref{eq:hh}.
By substituting complex numbers to $t$, we obtain a one-parameter family $\lc \scX_t \rc_{t \in \bC^\ast}$ of polarized complex Calabi--Yau manifolds.

\section{Tropical spheres of Haase--Zharkov}\label{sc:sphere}

We keep the same setup as in \pref{sc:setting}.
For faces $F \prec \nabla_{\check{h}}$, $G \prec \Delta_{h}$, we set 
\begin{align}
M_i(F)&:=\lc m \in \lb \Delta_i \cap M \rb \setminus \lc 0 \rc \relmid \check{h}(m)+\la m, n \ra =0, \forall n \in F \rc\\
N_i(G)&:=\lc n \in \lb \nabla_i \cap N \rb \setminus \lc 0 \rc \relmid h(n)+\la m, n \ra =0, \forall m \in G \rc
\end{align}
and 
\begin{align}
\scrP_{B} 
&:= \lc F \prec \nabla_{\check{h}} \relmid 
M_i(F) \neq \emptyset, \forall i \in \lc 1, \cdots, r \rc \rc\\
\scrP_{A} 
&:= \lc G \prec \Delta_{h} \relmid 
N_i(G) \neq \emptyset, \forall i \in \lc 1, \cdots, r \rc \rc.
\end{align}
We consider the tropical spheres 
\begin{align}\label{eq:B}
B_{\check{h}}&:= \bigcup_{F \in \scrP_B} F,\\ \label{eq:A}
A_{h}&:= \bigcup_{G \in \scrP_A} G.
\end{align}
of \cite[Section 2.3]{MR2187503}.
We have $B_{\check{h}}=B^{\check{h}}_\nabla$ (\cite[Proposition 5.4]{Yam21}), and this is a subset of the tropicalization of $X$ (\cite[Theorem 1.2(1)]{Yam21}).
(It is actually the union of bounded cells of the tropicalization of $X$. See \cite[Proposition 5.7]{Yam21}.)
It is known that $B_{\check{h}}$ and $A_{h}$ are indeed homeomorphic to a $d$-sphere when our nef-partition is irreducible in the sense of \cite[Definition 5.6]{MR1463173} (cf.~\cite[Theorem 2.8]{MR2187503}).
The polytopal complexes $\scrP_{B}$ and $\scrP_{A}$ consist of $d$-dimensional polytopes and their faces. 
The relative interiors of $d$-dimensional polytopes in $\scrP_{B}$ and $\scrP_{A}$ have natural integral affine structures, which induce the Lebesgue measures on $B_{\check{h}}$ and $A_h$, which will be denoted by $\nu$ and $\mu$ respectively.

\section{Proof of \pref{th:main}(2)}\label{sc:proof2}

As in \pref{sc:main-result}, let $L_h$ be the line bundle on the toric variety $X_{\Sigma'}$ over $K$ associated with the polytope $\Delta_h \subset M_\bR$, and $(X_{\Sigma'}^{\an}, L^{\an}_h)$ be the Berkovich analytification of $(X_{\Sigma'}, L_h)$ over $K$.
Let further $\Phi$ be a semi-positive toric metric on $L^{\an}_h$, and $\phi \in \scP\lb \Delta_h \rb$ be the corresponding convex function that satisfies \eqref{eq:P-delta}.
Take a basis $\lc m_1, \cdots m_{d+r} \rc$ of $M_\bR$ so that $m_i \in \Delta_h \cap M$ for all $i \in \lc 1, \cdots, d+r\rc$.
For any $m \in M_\bR$, we write it as $m=\sum_{i=1}^{d+r} p_i m_i$ with $p_i \in \bR$, and define the metric $\log \left| s^m \right|$ on $L^{\an}_h$ by
\begin{align}
    \log \left| s^m \right|:=\sum_{i=1}^{d+r} p_i \log \left| s^{m_i} \right|,
\end{align}
where $\log \left| s^{m_i} \right|$ is the metric associated with the section of $L_h$ corresponding to $m_i \in \Delta_h \cap M$. 

\begin{lemma}\label{lm:toric-metric}
There exists a bounded continuous function $\phi^\ast \colon \partial\Delta_{h} \to \bR$ such that
\begin{align}\label{eq:toric-metric}
    \Phi=\sup_{m \in \partial \Delta_h} \lc \log |s^m|-\phi^\ast(m) \rc.
\end{align}
\end{lemma}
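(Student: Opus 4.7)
The plan is to leverage the bijection from \cite[Theorem~4.8.1(1)]{MR3222615} between $\scP(\Delta_h)$ and semi-positive toric metrics on $L_h^{\an}$, constructing $\phi^\ast$ as (a variant of) the Legendre--Fenchel conjugate of $\phi$ restricted to $\partial \Delta_h$. Concretely, I would first define
\begin{align*}
\phi^\vee(m) := \sup_{n \in N_\bR} \lc \la m, n \ra - \phi(n) \rc, \quad m \in M_\bR,
\end{align*}
and take $\phi^\ast$ to be the restriction (possibly after continuous extension) of $\phi^\vee$ to $\partial \Delta_h$. The bounded-difference hypothesis \eqref{eq:P-delta}, combined with the standard computation that $\phi_h^\vee$ is (up to sign conventions) the convex indicator of $\Delta_h$, ensures $\phi^\vee$ is finite and bounded on $\Delta_h$ and $+\infty$ outside; continuity of $\phi^\ast$ on the compact set $\partial \Delta_h$ then follows from the standard regularity of the Legendre--Fenchel conjugate for convex functions differing by a bounded amount from a support function.

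With $\phi^\ast$ in hand, I would verify \eqref{eq:toric-metric} using Legendre--Fenchel inversion $\phi = \phi^{\vee\vee}$ together with the identification $\log|s^m|(\xi) = \la m, \trop(\xi) \ra$ of the section norm on the toric skeleton (coming from the linear extension $\log|s^m| = \sum_i p_i \log|s^{m_i}|$ set up in the paragraph preceding the lemma). This gives the sup representation of $\Phi$ over all of $\Delta_h$ directly. The reduction from the $\Delta_h$-sup to the $\partial \Delta_h$-sup is the crucial last step: for piecewise linear $\phi$, corresponding to model semi-positive metrics, $\phi^\vee$ is a convex indicator of a sub-polytope and the sup is attained at its vertices, hence on $\partial \Delta_h$; for general $\phi \in \scP(\Delta_h)$, one expects the reduction to follow by approximating $\phi$ with piecewise linear convex functions and passing to the limit via the envelope description of non-archimedean semi-positive metrics.

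The main obstacle I anticipate is this reduction of the $\Delta_h$-sup to a $\partial \Delta_h$-sup in the general (non-piecewise-linear) case, while simultaneously preserving continuity and boundedness of $\phi^\ast$. I would approach it by combining the approximation by model metrics with a careful convex-analytic argument on $\Delta_h$, exploiting the recession properties of $\phi$ inherited from the bounded difference $\phi - \phi_h$, which controls the asymptotic behavior of the Legendre--Fenchel conjugate near $\partial \Delta_h$.
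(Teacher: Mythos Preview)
The paper's proof is much shorter than yours: it directly invokes \cite[Lemma~10]{AH23} to obtain a bounded continuous $\phi^\ast\colon\partial\Delta_h\to\bR$ satisfying $\phi(n)=\sup_{m\in\partial\Delta_h}\lc\la m,-n\ra-\phi^\ast(m)\rc$, and then verifies \eqref{eq:toric-metric} by evaluating the section $s^0$ at the Gauss point of each $n\in N_\bR$. Your final verification step, identifying $\log|s^m|$ with $\la m,-\trop(\cdot)\ra$ on the torus and matching against $\phi$, is this same Gauss-point computation, so on that half you and the paper agree.

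Your construction of $\phi^\ast$, however, has a genuine gap at exactly the point you flag. Taking $\phi^\ast:=\phi^\vee|_{\partial\Delta_h}$ and then reducing the $\Delta_h$-supremum to a $\partial\Delta_h$-supremum does not work for arbitrary $\phi\in\scP(\Delta_h)$: whenever $\phi$ is differentiable at some $n_0$ with $-\nabla\phi(n_0)$ in the \emph{interior} of $\Delta_h$, the unique affine minorant touching $\phi$ at $n_0$ has slope off $\partial\Delta_h$, and no $\partial\Delta_h$-indexed family of affine minorants (for any choice of $\phi^\ast$, not just $\phi^\vee|_{\partial\Delta_h}$) can attain the value $\phi(n_0)$. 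This already occurs in the piecewise linear case---for instance $\Delta_h=[-1,1]^2$ and $\phi(n)=\max(|n_1|+|n_2|,1)$, where $\nabla\phi(0)=0$---so your assertion that for PL $\phi$ ``the sup is attained at its vertices, hence on $\partial\Delta_h$'' is not correct, and the approximation argument built on it cannot close the gap. The paper sidesteps the issue entirely by citing \cite{AH23}; it is also worth noting that for the $\phi$ actually used downstream---a solution of \eqref{eq:real-MA}, whose subgradients on each facet lie in $A_h\subset\partial\Delta_h$---the boundary reduction becomes automatic, and in that regime your Legendre-conjugate construction would go through directly.
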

\begin{proof}
By \cite[Lemma 10]{AH23}, there exists a bounded continuous function $\phi^*:\partial\Delta_{h} \to \bR$ such that 
\begin{align}\label{eq:c-convex}
\phi (n)=\sup_{m \in \partial \Delta_{h}} \lc \la m, -n \ra-\phi^*(m) \rc
\end{align}
for any $n\in N_\bR$. 
The sign of $n$ in \eqref{eq:c-convex} differs from that of \cite{MR4692438, MR4701493, AH23}.
This discrepancy is due to our sign convention mentioned in the beginning of \pref{sc:setting}.
Let $n \in N_\bR$ be an arbitrary element.
In additive notation of metrics, the evaluation of the section $s^{m'}$ with $m'=0$ of $L_h$ by the metric of the right hand side of \eqref{eq:toric-metric} at the Gauss point of $n$ is
\begin{align}
\sup_{m \in \partial \Delta_{h}} \lc -\sum_{i=1}^{d+r} p_i \la m_i, n \ra-\phi^\ast(m) \rc
=\sup_{m \in \partial \Delta_{h}} \lc \la m, -n \ra-\phi^\ast(m) \rc
=\phi (n).
\end{align}
This implies \eqref{eq:toric-metric}, since $\Phi$ is the semi-positive toric metric associated with $\phi$.
\end{proof}

One can construct a minimal snc-model $\scrX \to \Spec R$ of $X$ by blowing up the toric degeneration $\scX \to \Spec R$ along irreducible components of the special fiber repeatedly \cite[Proposition 5.13]{Yam24}.
We refer the reader to \cite[Section 5.2]{Yam24} for more details of the construction of the model $\scrX$.
Let $\Sk(\scrX) \subset X^{\an}$ and $\rho_\scrX \colon X^{\an} \to \Sk(\scrX)$ be the skeleton and the Berkovich retraction associated with the model $\scrX$, where $X^{\an}$ is the Berkovich analytification of $X$ over $K$.
We consider the tropicalization map
\begin{align}\label{eq:trop}
    \trop \colon \bT^{\an} \to N_\bR=\Hom \lb M, \bR \rb, \quad x 
    \mapsto \lb m \mapsto -\log |z^m|_x \rb,
\end{align}
where $\bT^{\an} \subset \lb X_{\Sigma'} \rb^{\an}$ is the Berkovich analytification of $\bT:=\Spec K \ld M \rd$.

\begin{lemma}{\rm(cf.~\cite[Lemma 7.1(2)]{MR4692438})}\label{lm:trop-rho}
One has
    \begin{align}
        \trop=\trop \circ \rho_\scrX
    \end{align}
on the preimages by the retraction $\rho_\scrX$ of the interiors of all the facets of the skeleton $\Sk(\scrX)$.
\end{lemma}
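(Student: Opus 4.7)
The plan is to follow the approach of \cite[Lemma 7.1(2)]{MR4692438}, reducing the assertion to a local computation at each zero-dimensional stratum of $\scrX_0$. The first step is to observe that $\scrX$ embeds equivariantly into an ambient toric scheme: since the initial special fiber $\scX_0 \subset X_{\tilde{\Sigma}'}$ is cut out by the torus-invariant equations $t=0$ and $s_i^0=0$, each of its irreducible components is a union of closed toric strata, and this property is preserved at every stage of the iterated blow-up construction of \cite[Section 5.2]{Yam24}. Hence the whole sequence of blow-ups extends $\bT$-equivariantly to the ambient, producing a toric refinement $\tilde{\Sigma}'' \supset \tilde{\Sigma}'$ and an embedding $\scrX \hookrightarrow X_{\tilde{\Sigma}''}$ in which $\scrX$ is the strict transform of $\scX$ and every zero-dimensional stratum of $\scrX_0$ lies on the orbit $O_{\tilde{\tau}}$ of a simplicial unimodular cone $\tilde{\tau} \in \tilde{\Sigma}''$ of dimension $d+1$.

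Fix a facet $\sigma$ of $\Sk(\scrX)$ with corresponding zero-dimensional stratum $p \in \scrX_0$, and let $\tilde{\tau}$ be as above. Extend the primitive generators $u_0, \ldots, u_d$ of $\tilde{\tau}$ to a basis $u_0, \ldots, u_{d+r}$ of $N \oplus \bZ$ and let $\tilde{m}_0, \ldots, \tilde{m}_{d+r} \in M \oplus \bZ$ denote the dual basis. Setting $z_j := z^{\tilde{m}_j}$, the affine toric chart of $X_{\tilde{\Sigma}''}$ around $p$ is $\Spec \bC \ld z_0, \ldots, z_d, z_{d+1}^{\pm 1}, \ldots, z_{d+r}^{\pm 1}\rd$, in which $O_{\tilde{\tau}} = \lc z_0 = \cdots = z_d = 0 \rc$. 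Restricted to $\scrX$, the elements $z_0, \ldots, z_d$ form a regular system of parameters at $p$ and cut out the $d+1$ irreducible components of $\scrX_0$ passing through $p$.

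For $x \in \rho_\scrX^{-1}(\rint \sigma)$, the description of the Berkovich retraction associated with an snc-model (cf.~\cite[Section 3]{MR3370127}) identifies $\rho_\scrX(x)$ with the monomial valuation on $\scO_{\scrX, p}$ having weights $w_i := v_x(z_i)$ for $i = 0, \ldots, d$. Since the reduction of $v_x$ is $p$, every unit of $\scO_{\scrX, p}$ has $v_x$-value zero. Given $m \in M$, decompose $(m, 0) = \sum_{j=0}^{d+r} a_j \tilde{m}_j$ with $a_j \in \bZ$, so that
\begin{align}
z^m = \prod_{j=0}^d z_j^{a_j} \cdot \prod_{j=d+1}^{d+r} z_j^{a_j};
\end{align}
the second factor is a unit on the chart and contributes zero to both $v_x$ and $v_{\rho_\scrX(x)}$, while the first is a Laurent monomial in the local parameters, giving
\begin{align}
\trop(\rho_\scrX(x))(m) = v_{\rho_\scrX(x)}(z^m) = \sum_{j=0}^d a_j w_j = \sum_{j=0}^d a_j v_x(z_j) = v_x(z^m) = \trop(x)(m).
\end{align}
The principal obstacle is the first step, namely verifying that the blow-up sequence of \cite{Yam24} genuinely admits a $\bT$-equivariant extension yielding a toric refinement $\tilde{\Sigma}''$ that is unimodular at the cones whose orbits contain zero-dimensional strata of $\scrX_0$; once this toric local picture is in hand, the remainder of the argument is a direct monomial calculation.
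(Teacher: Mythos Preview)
You have correctly flagged the gap yourself: the claim that the blow-up sequence of \cite{Yam24} extends $\bT$-equivariantly to the ambient toric scheme, yielding a refinement $\tilde{\Sigma}''$ in which every zero-dimensional stratum of $\scrX_0$ sits on the orbit of a $(d{+}1)$-dimensional unimodular cone, is not established and is not a formality. While the initial components of $\scX_0$ are torus-invariant (the $s_i^0$ being monomial), carrying this through the iterated blow-ups requires checking at each step that the new special-fiber components of the strict transform remain torus-invariant in the blown-up ambient, and then that the zero-strata of the final $\scrX_0$ land on orbits of the correct dimension rather than deeper ones. Absent this, the assertion that $z_{d+1},\ldots,z_{d+r}$ are units at $p$---which is what drives your monomial calculation---has no foundation.

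The paper avoids this issue by a different route. It does not embed $\scrX$ into any toric scheme; instead it quotes the explicit local chart of $\scrX$ itself computed in \cite[Section~5.3]{Yam24}: at each zero-stratum there is a basis $\lc m_l \rc_{l=1}^{d+r+1}$ of $M\oplus\bZ$ together with relations $t=\prod_{l=1}^{d+1} z^{m_l}$ and $z^{m_{d+1+i}}=f_i\cdot\prod_{l=l_i}^{d+1} z^{m_l}$, where the $f_i$ are polynomials proportional to the generic sections $s_i$ and hence have nonzero constant term. In this chart the extra coordinates $z^{m_{d+1+i}}$ are \emph{not} units; the heart of the argument is instead the equality $-\log |f_i|_x=0$ for any $x$ centered at the zero-stratum, deduced from the nonvanishing constant term together with the strict positivity $-\log |z^{m_l}|_x>0$ for $l\le d+1$. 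From this one reads off $-\log |z^{m_l}|_x=-\log |z^{m_l}|_{\rho_\scrX(x)}$ for all $l$, and hence $\trop(x)=\trop(\rho_\scrX(x))$, directly from the intrinsic chart description with no appeal to an ambient toric structure on $\scrX$.
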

\begin{proof}
    Let $\tau \subset \Sk(\scrX)$ be a facet of the skeleton $\Sk(\scrX)$, and $x \in X^{\an}$ be a point such that $\rho_\scrX(x) \in \rint(\tau)$.
    We will show $\trop(x)=\trop \circ \rho_\scrX(x)$.
    The facet $\tau$ is a simplex having $d+1$ vertices.
    Let $D_i$ $(1 \leq i \leq d+1)$ be the irreducible components of the special fiber of $\scrX$ corresponding to the vertices of $\tau$.
    Since $\rho_\scrX(x) \in \rint(\tau)$, the center of the point $x$ is the intersection point $\bigcap_{i=1}^{d+1} D_i$.
    We can see from the computation of repetitive blow-ups producing the model $\scrX$ in \cite[Section 5.3]{Yam24} that there is a local chart $\scrU \subset \scrX$ which contains the point $\bigcap_{i=1}^{d+1} D_i$ and satisfies the following:
    \begin{enumerate}
    \item The chart $\scrU$ is written as
    \begin{align}
         \scrU=\Spec \left. R \ld z^{m_l} : 1 \leq l \leq d+r+1 \rd \middle/ \lb t-\prod_{l=1}^{d+1} z^{m_l}, -z^{m_{d+1+i}} +f_i \cdot \prod_{l = l_i}^{d+1} z^{m_l} (1 \leq i \leq r) \rb \right.,
    \end{align}
    where
    \begin{itemize}
    \item $\lc m_l \relmid 1 \leq l \leq d+r+1 \rc$ is a basis of $M \oplus \bZ$,
    \item $f_i \in \bC \ld z^{m_l} : 1 \leq l \leq d+r+1 \rd$ $(1 \leq i \leq r)$ are polynomials that are proportional to the general sections $s_i$ that we took in \eqref{eq:tss}
    (in particular, their constant terms are not zero), and
    \item $l_i$ $(1 \leq i \leq r)$ are integers such that $1 \leq l_i \leq d+1$.
\end{itemize}
\item On the chart $\scrU$, the irreducible component $D_i$ $(1 \leq i \leq d+1)$ is defined by $z^{m_i}$
\end{enumerate}
    (cf.~\cite[Lemma 5.2, Lemma 5.3, Lemma 5.7(2), Corollary 5.8, Lemma 5.12]{Yam24}).
    Since $\rho_\scrX(x) \in \rint(\tau)$, we have
\begin{align}\label{eq:logz}
    -\log |z^{m_l}|_{x} &> 0\quad (1 \leq l \leq d+1),\\
    -\log |z^{m_l}|_{x} &\geq 0 \quad (d+2 \leq l \leq d+r+1).
\end{align}
From these, one can get $-\log | f_i |_x \geq 0$, and 
\begin{align}\label{eq:logz2}
    -\log |z^{m_l}|_{x}
=-\log \left| f_i \cdot \prod_{l = l_i}^{d+1} z^{m_l} \right|_x
=-\log | f_i |_x -\sum_{l =l_i}^{d+1} \log |z^{m_l}|_{x}>0
\end{align}
for $l=d+1+i$ $(1 \leq i \leq r)$.
By using \eqref{eq:logz} and \eqref{eq:logz2}, one can get
\begin{align}\label{eq:logf}
    -\log |f_i|_{x} =0,
\end{align}
since the constant term of the polynomial $f_i$ is not zero.

    We consider the maps
    \begin{align}
        M \oplus \bZ &\to \bR, \quad m \mapsto -\log |z^m|_x, \\
        M \oplus \bZ &\to \bR, \quad m \mapsto -\log |z^m|_{\rho_{\scrX}(x)}.
    \end{align}
    These maps define the elements $\trop(x) \times \lc 1 \rc$ and $\trop\lb \rho_\scrX (x) \rb \times \lc 1 \rc$ in $N_\bR \oplus \bR$ respectively.
    Hence, it suffices to show 
    \begin{align}\label{eq:ttr}
        -\log |z^{m_l}|_x=-\log |z^{m_l}|_{\rho_{\scrX}(x)}
    \end{align}    
    for all $l \in \lc 1, \cdots, d+r+1\rc$.
    If $1 \leq l \leq d+1$, then \eqref{eq:ttr} is obvious from the definition of the Berkovich retraction.
If $l=d+1+i$ $(1 \leq i \leq r)$, then
\begin{align}
\log |z^{m_l}|_{\rho_{\scrX}(x)}
&=\log \left| f_i \cdot \prod_{l = l_i}^{d+1} z^{m_l} \right|_{\rho_{\scrX}(x)}
=\sum_{l =l_i}^{d+1} \log |z^{m_l}|_{x},\\
\log |z^{m_l}|_{x}
&=\log \left| f_i \cdot \prod_{l = l_i}^{d+1} z^{m_l} \right|_x
=\sum_{l =l_i}^{d+1} \log |z^{m_l}|_{x},
\end{align}
where we use \eqref{eq:logf} for the last equality.
Thus we get \eqref{eq:ttr} also for $l=d+1+i$.
We conclude the claim.
\end{proof}

The strictly convex function $\tilde{h}$ of \pref{cd:add2} gives a line bundle on the toric variety $X_{\tilde{\Sigma}'}$.
By taking its base change to $R$, we obtain a model of the line bundle $L_h$, which will be denoted by $\scL$.
By abuse of notation, let $L_h$ (resp. $\scL$) also denote the restriction of $L_h$ (resp. $\scL$) to $X$ (resp. $\scX$).
Let further $\scrL \to \scrX$ denote the pull-back of the model $\scL$ of $L_h$ via the composition of the repetitive blow-ups $\scrX \to \scX$.
We also write the restriction of the semi-positive toric metric $\Phi$ to the analytification of $L_h \to X$ as $\Phi$.
We define the potential function $\psi \colon X^{\an} \to \bR$ of $\Phi$ by
\begin{align}
    \psi:=\Phi-\phi_\scrL,
\end{align}
where $\phi_\scrL$ is the model metric associated with $\scrL$.

By \cite[Theorem 3.3.3]{MR3595497}, the skeleton $\Sk (\scrX)$ associated with the minimal snc-model $\scrX$ equals the essential skeleton $\Sk(X)$ of $X$.
\cite[Theorem 1.1(1)]{Yam24} claims that the tropicalization map $\trop$ of \eqref{eq:trop} gives an identification of $\Sk(X)$ with the dual intersection complex $B^{\check{h}}_\nabla$ of the toric degeneration $f \colon \scX \to \Spec R$.
The simplicial structure of $\Sk(\scrX)$ is a subdivision of the polyhedral structure $\scrP(\tilde{\Sigma}')$ of $B^{\check{h}}_\nabla$.
In the following, we identify $\Sk(\scrX)$ with $B^{\check{h}}_\nabla$, and think of the restriction of the convex function $\phi \in \scP\lb \Delta_h \rb$ to $B_\nabla^{\check{h}}=B_{\check{h}}$ also as a function on $\Sk(\scrX)$.

\begin{lemma}{\rm(cf.~\cite[Lemma 3.6]{MR4701493})}\label{lm:potential-retraction}
The following hold:
\begin{enumerate}
\item
One has
\begin{align}
    \psi=\psi \circ \rho_\scrX
\end{align}
on the preimages by $\rho_\scrX$ of the interiors of all the facets of the skeleton $\Sk(\scrX)$.
\item For any facet $\tau$ of $\Sk(\scrX)$, the function 
\begin{align}
    \left. \psi\right|_\tau-\left. \phi \right|_\tau
\end{align}
is an integral affine function on $\tau$. 
\end{enumerate}
\end{lemma}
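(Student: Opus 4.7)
The approach is to reduce both parts to the toricity of the metric $\Phi$ (expressed via \pref{lm:toric-metric}) and of the model metric $\phi_{\scrL}$, combined with the fiberwise invariance of $\trop$ under $\rho_{\scrX}$ from \pref{lm:trop-rho}. The key observation is that on the dense analytic torus $\bT^{\an} \cap X^{\an}$, both $\Phi$ and $\phi_{\scrL}$ factor through $\trop$. Indeed, \pref{lm:toric-metric} together with the identity $\log|s^m|_x = -\langle m, \trop(x) \rangle$ shows that $\Phi$ is given on the torus by $\phi \circ \trop$. For the model metric, the line bundle $\scL$ is the base change to $R$ of the toric line bundle on $X_{\tilde{\Sigma}'}$ associated to $\tilde{h}$, so $\phi_{\scL}$ is toric and equals $\tilde{h}(\trop(\cdot), 1)$ on the torus of $X$ (using that the uniformizer $t$ has valuation $1$); the pullback $\phi_{\scrL}$ along the birational morphism $\scrX \to \scX$ agrees with $\phi_{\scL}$ on $L_h^{\an}$.

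For part (1), it follows that $\psi|_{\bT^{\an} \cap X^{\an}} = (\phi - \tilde{h}(\cdot, 1)) \circ \trop$. Combined with \pref{lm:trop-rho}, this gives $\psi = \psi \circ \rho_{\scrX}$ on the intersection of $\bT^{\an} \cap X^{\an}$ with the preimage under $\rho_{\scrX}$ of the interior of any facet of $\Sk(\scrX)$. Since $X$ is irreducible, $\bT^{\an} \cap X^{\an}$ is dense in $X^{\an}$, so by continuity of $\psi$ and $\rho_{\scrX}$ the identity extends to the full preimage.

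For part (2), identify $\tau \subset \Sk(\scrX)$ with $\trop(\tau) \subset B^{\check{h}}_\nabla \subset N_\bR$ via \cite[Theorem 1.1(1)]{Yam24}. From part (1) we obtain $\psi|_\tau = \phi|_\tau - \tilde{h}(\cdot, 1)|_\tau$, hence $\psi|_\tau - \phi|_\tau = -\tilde{h}(\cdot, 1)|_\tau$. The simplicial structure on $\Sk(\scrX)$ refines the polyhedral decomposition $\scrP(\tilde{\Sigma}')$, whose cells are the slices at height $1$ of cones of $\tilde{\Sigma}'$; since $\tilde{h}$ is an integral strictly convex piecewise linear function on $\tilde{\Sigma}'$ by \pref{cd:add2}, its restriction $\tilde{h}(\cdot, 1)$ is integral affine on each such facet $\tau$, which yields the claim.

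The main obstacle I anticipate is the rigorous identification of $\phi_{\scrL}|_{\bT^{\an}}$ with $\tilde{h}(\trop(\cdot), 1)$. Unlike \cite{MR4692438, MR4701493}, where the model is the toric degeneration itself and this identification is automatic, here one must argue that the pullback from $\scL$ on $\scX$ to $\scrL$ on the iteratively blown-up model $\scrX$ does not change the induced metric on $L_h^{\an}$, a standard property of birational pullbacks of line bundle models. One must also verify that the interior of every facet of $\Sk(\scrX)$ meets $\bT^{\an} \cap X^{\an}$, which is transparent from the explicit toric chart $\scrU$ used in the proof of \pref{lm:trop-rho}.
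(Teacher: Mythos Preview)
Your proposal is correct and follows essentially the same approach as the paper: both show that $\psi$ factors through $\trop$ by combining the toricity of $\Phi$ (via \pref{lm:toric-metric}) and of $\phi_{\scrL}=\phi_{\scL}$ with \pref{lm:trop-rho}, and then read off part~(2) from the resulting explicit formula. The only cosmetic difference is that you package the model metric globally as $\tilde{h}(\trop(\cdot),1)$, whereas the paper works cone-by-cone, choosing on each maximal cell $\sigma\in\scrP(\tilde{\Sigma}')$ a trivializing section of $\scL$ corresponding to a lattice point $(m_\sigma,l_\sigma)$ and computing $\psi(x)=\la m_\sigma,\trop(x)\ra + l_\sigma + \phi(\trop(x))$; since $(m_\sigma,l_\sigma)$ is precisely the slope of $\tilde{h}$ on the cone $C_\sigma$, your formula $-\tilde{h}(\cdot,1)|_\tau$ and the paper's $\la m_\sigma,\cdot\ra + l_\sigma$ agree.
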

\begin{proof}
First, we show the former claim.
We can see from \pref{lm:trop-rho} that it suffices to show that for any maximal dimensional polytope $\sigma=\rotatebox[origin=c]{180}{$\beta$} (\mu) + \nu \in \scrP(\tilde{\Sigma}')$, the value of the potential function $\psi$ depends only on the image of the tropicalization map $\trop$ over $\trop^{-1}\lb \sigma \rb$.
Let $x \in \bT^{\an} \cap X^{\an}$ be a point such that $\trop(x) \in \sigma$.
We consider the element
\begin{align}\label{eq:trop1}
\lb M \oplus \bZ \ni \tilde{m} \mapsto -\log |z^{\tilde{m}} |_x \rb \in \Hom \lb M \oplus \bZ, \bR \rb =N_\bR \oplus \bR,
\end{align}
which is equal to $\trop(x) \times \lc 1\rc \in N_\bR \oplus \bR$.
Since $\trop(x) \in \sigma=\rotatebox[origin=c]{180}{$\beta$} (\mu) + \nu$, we have
\begin{align}
    \trop(x) \times \lc 1\rc \in C_{\sigma} \cap \lb N_\bR \times \lc 1 \rc \rb,
\end{align}
where 
\begin{align}\label{eq:cone}
    C_\sigma:=\cone(\mu) \times \lc 0 \rc + \cone \lb \nu \times \lc 1 \rc \rb \in \tilde{\Sigma}'.
\end{align}
This implies that the point $x$ is contained in
\begin{align}\label{eq:bet}
    \lc y \in \lb \Spec A_\sigma \times_{k[t]} K \rb^{\an} \cap X^{\an} \relmid -\log |f|_y  \geq 0, \forall f \in A_\sigma \otimes_{k[t]} R \rc
\end{align}
with $A_\sigma:=k \ld \check{C}_\sigma \cap (M \oplus \bZ ) \rd$, where $\check{C}_\sigma$ is the cone dual to $C_\sigma$.

The model metric $\phi_\scrL$ coincides with the model metric $\phi_\scL$ associated with $\scL$ by \cite[Lemma 5.10(ii)]{MR4192993}.
By the definition of model metrics, the evaluation of a trivializing section of $\scL$ on $\Spec A_\sigma \times_{k[t]} R$ by the model metric $\phi_\scL$ is constantly $0$ (in additive notation) on \eqref{eq:bet}.
Let $\lb m_\sigma, l_\sigma \rb \in M \oplus \bZ$ be the lattice point that corresponds to a section of $\scL$ trivializing $\scL$ on $\Spec A_\sigma \times_{k[t]} R$.
Then using \pref{lm:toric-metric}, we obtain
\begin{align}
\psi(x)&=\lb \Phi-\phi_\scL\rb(x)\\
&=\sup_{m \in \partial \Delta_h} \lc \log \left| \frac{z^m}{t^{l_\sigma} z^{m_\sigma}} \right|_x-\phi^\ast(m) \rc\\
    &=
    \la m_\sigma, \trop(x) \ra+l_\sigma
    +\sup_{m \in \partial \Delta_h} \lc \la m, -\trop (x) \ra -\phi^\ast(m) \rc\\
    &=\la m_\sigma, \trop(x) \ra+l_\sigma+\phi \lb \trop (x) \rb.
\end{align}
It is clear that this depends only on the image by $\trop$.
Thus we can conclude the former claim.
The latter claim also follows from the last equation.
Notice that the element $\lb m_\sigma, l_\sigma \rb$ depends only on $\sigma$.
\end{proof}
In particular, if (ii) of \pref{th:main} holds and $\Phi$ is the solution to \eqref{eq:NA-MA}, then \pref{lm:potential-retraction}(1) confirms the NA MA-real MA comparison property of \cite[Definition 3.3]{MR4688155}, that is, \pref{th:main}(2).

\begin{remark}
For the NA MA-real MA comparison property in the case of Calabi--Yau hypersurfaces, see also \cite[Corollary 8.2]{MR4692438} and \cite[Theorem 5.4]{PS22}.
\end{remark}

\section{Proof of \pref{th:main}(1)}\label{sc:proof1}

In the following, let $X_{\Sigma'}$ denote the toric variety over $\bC$ associated with the fan $\Sigma'$.
For each $i \in \lc 1, \cdots, r \rc$, let $\scX_t^i$ denote the complex hypersurface in the toric variety $X_{\Sigma'}$, which is defined by the equation
\begin{align}
    ts_i+s_i^0=0 \quad (t \in \bC)
\end{align}
appearing in \eqref{eq:tss}.
For any subset $I \subset \lc 1, \cdots, r \rc$ and any line bundle $L$ on the toric variety $X_{\Sigma'}$, we define
\begin{align}
    h(I, L)
    :=
    \sum_{J \subset I} (-1)^{|J|} \cdot h^0 \lb X_{\Sigma'}, L \otimes \scO \lb-\sum_{j \in J} \scX_t^j \rb \rb.
\end{align}

\begin{lemma}\label{lm:claim-dim}
Let $I \subset \lc 1, \cdots, r \rc$ be a non-empty subset, and $L'$ be any line bundle  on the toric variety $X_{\Sigma'}$.
    For a sufficiently large $k \in \bZ_{>0}$, we have
    \begin{align}\label{eq:claim}
        h^0 \lb \bigcap_{i\in I}\scX_t^i, kL_h \otimes L' \rb
        =
        h \lb I, kL_h \otimes L' \rb,
    \end{align}
    where $kL_h$ is the line bundle on the toric variety $X_{\Sigma'}$ associated with the polytope $k \Delta_h \subset M_\bR$.
\end{lemma}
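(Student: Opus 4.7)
The plan is to argue via the Koszul resolution of the complete intersection $\bigcap_{i \in I} \scX_t^i$ combined with Serre vanishing. Since $\Sigma'$ is unimodular by \pref{cd:add}, the ambient $X_{\Sigma'}$ is a smooth (and complete) toric variety. Furthermore, the strict convexity of $h$ makes $L_h$ ample on $X_{\Sigma'}$, so $X_{\Sigma'}$ is in fact smooth projective, and Serre vanishing is available for $kL_h$ twisted by an arbitrary line bundle.

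Next, I would argue that the sections $ts_i + s_i^0 \in \Gamma\lb X_{\Sigma'}, \scO(\scX_t^i) \rb$ for $i \in I$ form a regular sequence on $X_{\Sigma'}$. This follows from the setup of \pref{sc:setting}: the full intersection $\bigcap_{i=1}^r \scX_t^i$ is the generic fiber $X$, which is smooth of the expected codimension $r$ by the Bertini-type argument in \pref{sc:setting}, so any subfamily indexed by $I \subset \{1,\dots,r\}$ also cuts out a subscheme of the expected codimension $|I|$. Consequently, the Koszul complex associated to these sections is exact, and tensoring it with $kL_h \otimes L'$ gives a locally free resolution
\begin{align}
0 \to \bigoplus_{\substack{J \subset I \\ |J|=|I|}} \scO\lb -\sum_{j\in J}\scX_t^j\rb \otimes kL_h \otimes L' \to \cdots \to kL_h \otimes L' \to \lb kL_h \otimes L'\rb\big|_{\bigcap_{i\in I}\scX_t^i} \to 0.
\end{align}

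Then I would invoke Serre vanishing: since $L_h$ is ample and there are only finitely many subsets $J \subset I$, there exists $k_0$ such that for every $k \geq k_0$ and every $J \subset I$ we have $H^p\lb X_{\Sigma'}, \lb kL_h \otimes L'\rb \otimes \scO\lb -\sum_{j \in J}\scX_t^j \rb \rb = 0$ for all $p \geq 1$. Splitting the Koszul resolution into short exact sequences and iterating the associated long exact sequences in cohomology, these vanishings propagate to give both $H^p\lb \bigcap_{i \in I}\scX_t^i, kL_h\otimes L' \rb = 0$ for $p \geq 1$ and the alternating-sum identity
\begin{align}
h^0\lb \bigcap_{i\in I}\scX_t^i, kL_h \otimes L' \rb = \sum_{J \subset I} (-1)^{|J|} h^0\lb X_{\Sigma'}, \lb kL_h \otimes L' \rb \otimes \scO\lb -\sum_{j \in J}\scX_t^j \rb \rb,
\end{align}
which is exactly $h\lb I, kL_h \otimes L'\rb$.

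The main point that needs care is the verification that $\lc ts_i + s_i^0 \rc_{i \in I}$ is a regular sequence so that the Koszul complex is genuinely a resolution rather than merely a complex; everything else is then formal from the ampleness of $L_h$ on the smooth projective toric variety $X_{\Sigma'}$. This regularity is essentially built into the hypotheses of \pref{sc:setting} (smoothness and irreducibility of $X$ via Bertini, together with the expected codimension of the complete intersection), so I do not expect a serious difficulty here.
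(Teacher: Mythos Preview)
Your argument is correct and amounts to the same mathematics as the paper's, repackaged. The paper proceeds by induction on $|I|$: at each step it uses the restriction short exact sequence on $\bigcap_{i\in I_0}\scX_t^i$ together with Serre vanishing on that subvariety to obtain
\[
h^0\lb \bigcap_{i\in I_0\cup\lc i_0\rc}\scX_t^i,\, kL_h\otimes L'\rb
= h^0\lb \bigcap_{i\in I_0}\scX_t^i,\, kL_h\otimes L'\rb
- h^0\lb \bigcap_{i\in I_0}\scX_t^i,\, kL_h\otimes L'\otimes\scO(-\scX_t^{i_0})\rb,
\]
and then closes the induction via the bookkeeping identity $h(I_0\cup\lc i_0\rc,L)=h(I_0,L)-h(I_0,L\otimes\scO(-\scX_t^{i_0}))$. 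This is precisely the Koszul resolution unwound one layer at a time, so the only genuine difference is that you invoke Koszul on $X_{\Sigma'}$ in one shot and apply Serre vanishing to all its terms simultaneously, while the paper peels off one hypersurface per step and applies Serre vanishing on the successive partial intersections. Both routes rest on the same two inputs you isolated---ampleness of $L_h$ and the regular-sequence property (the paper uses the latter tacitly when it declares the restriction sequence short exact). One small slip: $\bigcap_{i=1}^r \scX_t^i$ is the complex fiber $\scX_t$, not the generic fiber $X$ over $K$; this does not affect your argument.
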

\begin{proof}
We prove the lemma by induction on $|I|$.
Let $I_0 \subsetneq \lc 1, \cdots, r \rc$ be a subset, and $i_0 \in \lc 1, \cdots, r \rc \setminus I_0$ be an arbitrary element.
We set $I_1:=I_0 \cup \lc i_0 \rc$.
Consider the short exact sequence 
\begin{align}
    0 \to kL_h \otimes L' \otimes \scO \lb -\scX_t^{i_0}\rb
    \to kL_h \otimes L'
    \to \left. kL_h \otimes L' \right|_{\bigcap_{i \in I_1} \scX_t^{i}}
    \to 0
\end{align}
on $\bigcap_{i \in I_0} \scX_t^{i}$.
(We set $\bigcap_{i\in I_0}\scX_t^i:=X_{\Sigma'}$ for $I_0=\emptyset$.)
By this and Serre's vanishing theorem
\begin{align}
    h^1 \lb \bigcap_{i \in I_0} \scX_t^{i}, kL_h \otimes L' \otimes \scO \lb -\scX_t^{i_0}\rb \rb=0\quad (k \gg 1),
\end{align}
we get
\begin{align}\label{eq:exact-serre}
h^0 \lb \bigcap_{i \in I_1} \scX_t^{i}, kL_h \otimes L' \rb
=
    h^0 \lb \bigcap_{i \in I_0} \scX_t^{i}, kL_h \otimes L' \rb
    -
    h^0 \lb \bigcap_{i \in I_0} \scX_t^{i}, kL_h \otimes L' \otimes \scO \lb -\scX_t^{i_0} \rb \rb.
    \end{align}
When $I_0=\emptyset$, this implies \eqref{eq:claim} for the case where $|I|=1$.

In the following, supposing that \pref{lm:claim-dim} holds when $|I| = l$ $(l \in \bZ_{\geq 1})$, we prove that it holds also when $|I| = l+1$.
For $I \subset \lc 1, \cdots, r \rc$ such that $|I| = l+1$, we choose an arbitrary element $i_0 \in I$, and set $I_0:=I \setminus \lc i_0 \rc$ and $I_1:=I=I_0 \cup \lc i_0 \rc$.
By \eqref{eq:exact-serre} and the induction hypothesis, we get
\begin{align}\label{eq:h0}
h^0 \lb \bigcap_{i \in I_1} \scX_t^{i}, kL_h \otimes L' \rb
=
h \lb I_0, kL_h \otimes L' \rb
-
h \lb I_0, kL_h \otimes L' \otimes \scO \lb -\scX_t^{i_0} \rb \rb.
\end{align}
On the other hand, by definition, we have
\begin{equation}\label{eq:I1}
  \begin{split} 
h \lb I_1, kL_h \otimes L' \rb&=\sum_{i_0 \nin J \subset I_1} (-1)^{|J|} \cdot h^0 \lb X_{\Sigma'}, kL_h \otimes L' \otimes \scO \lb-\sum_{j \in J} \scX_t^j \rb \rb\\
&+
\sum_{i_0 \in J \subset I_1} (-1)^{|J|} \cdot h^0 \lb X_{\Sigma'}, kL_h \otimes L' \otimes \scO \lb-\sum_{j \in J} \scX_t^j \rb \rb.
  \end{split}   
\end{equation}
By mapping $J\in \{J\subset I_1 \ |\ i_0 \nin J\}$ to $J\subset I_0$ and $J\in \{J\subset I_1 \ |\ i_0 \in J\}$ to $J\setminus \{i_0\} \subset I_0$,
the former (resp. latter) part of the right hand side of \eqref{eq:I1} equals the former (resp. latter) part of the right hand side of \eqref{eq:h0}. 
Hence, \eqref{eq:h0} is equal to $h \lb I_1, kL_h \otimes L' \rb$.
Thus we conclude \pref{lm:claim-dim}.
\end{proof}

\begin{lemma}{\rm(cf.~\cite[Claim 3.8]{MR4701493})}\label{lm:vol}
    One has
    \begin{align}\label{eq:vol-L}
        d!\cdot \mu(A_h)=( L_h^d ).
    \end{align}
\end{lemma}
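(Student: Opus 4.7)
The plan is to compute $(L_h^d)$ via asymptotic Riemann--Roch on $X$, use \pref{lm:claim-dim} to reduce the computation to a Koszul-type alternating sum of toric $h^0$'s on the smooth ambient $X_{\Sigma'}$, and finally match the resulting mixed volume with $d!\,\mu(A_h)$ via the mixed-subdivision description of $A_h$.

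Concretely, asymptotic Riemann--Roch on the smooth projective $d$-dimensional variety $X$ gives $(L_h^d) = d! \lim_{k \to \infty} k^{-d} h^0(X, kL_h)$. For $k \gg 1$, \pref{lm:claim-dim} applied with $L' = \scO$ yields
\begin{align}
h^0(X, kL_h) = \sum_{J \subset \lc 1, \ldots, r \rc} (-1)^{|J|} h^0\!\lb X_{\Sigma'}, kL_h \otimes \scO\!\lb -\sum_{j \in J}\scX_t^j \rb \rb.
\end{align}
Since the generic section $s_j$ has Newton polytope $\Delta_j$, each divisor $\scX_t^j$ lies in the linear system of the toric line bundle associated with $\Delta_j$, so $kL_h \otimes \scO(-\sum_{j \in J}\scX_t^j)$ has support function $kh - \sum_{j \in J}\varphi_j$, which is convex on $\Sigma'$ for $k \geq 1$ by strict convexity of $h$ on $\Sigma'$ and linearity of $\varphi_j$ on cones of $\Sigma'$. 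By Demazure's formula on the smooth projective toric variety $X_{\Sigma'}$, its $h^0$ equals $\#(M \cap P_{k, J})$, where
\begin{align}
P_{k, J} := \lc m \in M_\bR \relmid \la m, n\ra + kh(n) - \sum_{j \in J}\varphi_j(n) \geq 0, \ \forall n \in N_\bR \rc.
\end{align}
Applying Ehrhart-type asymptotics $\#(M \cap P_{k, J}) = \mathrm{vol}(P_{k, J}) + O(k^{d+r-1})$ together with the mixed-volume expansion of $\mathrm{vol}\!\lb k\Delta_h - \sum_{j \in J}\Delta_j \rb$ as a polynomial in $k$, the inclusion--exclusion $\sum_J (-1)^{|J|}$ annihilates every monomial in which some $\Delta_j$ fails to appear. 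The lowest surviving order is $k^d$, with coefficient $\binom{d+r}{d, 1, \ldots, 1} V(\Delta_h^{(d)}, \Delta_1, \ldots, \Delta_r) = \frac{(d+r)!}{d!} V(\Delta_h^{(d)}, \Delta_1, \ldots, \Delta_r)$ by the standard mixed-volume formula. Combining these steps,
\begin{align}
(L_h^d) = (d+r)!\, V\!\lb \Delta_h^{(d)}, \Delta_1, \ldots, \Delta_r \rb.
\end{align}

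It then remains to establish $d!\,\mu(A_h) = (d+r)!\, V(\Delta_h^{(d)}, \Delta_1, \ldots, \Delta_r)$. For this, one identifies the $d$-dimensional cells of $\scrP_A$ with the ``mixed cells'' of type $(d, 1, \ldots, 1)$ in the coherent mixed subdivision of $\Delta_h + \Delta_1 + \cdots + \Delta_r$ induced by the piecewise linear data: the condition $N_i(G) \neq \emptyset$ for all $i$ selects precisely those faces $G \prec \Delta_h$ whose normal cone in $\Sigma'$ meets every $\nabla_i$ in a nonzero lattice point, and these correspond bijectively to the mixed cells whose $\Delta_i$-Minkowski summand is an edge. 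The classical mixed-subdivision formula for mixed volumes then identifies the total $d$-dimensional Lebesgue measure of such mixed cells with $\binom{d+r}{d, 1, \ldots, 1} V(\Delta_h^{(d)}, \Delta_1, \ldots, \Delta_r)$, completing the proof. The main obstacle will be setting up this combinatorial correspondence carefully, since it requires tracking the interaction between the PL structures on $\Sigma'$, the nef partition $\nabla = \nabla_1 + \cdots + \nabla_r$, and the coherent mixed subdivision of $\Delta_h + \sum_j \Delta_j$; once the bookkeeping is done, the identity reduces to the standard mixed-volume identity.
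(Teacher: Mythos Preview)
Your route through $(L_h^d)=(d+r)!\,V(\Delta_h^{(d)},\Delta_1,\ldots,\Delta_r)$ is correct; this is essentially BKK for the complete intersection, and your alternating-sum derivation also works once one notes that for $k\gg 1$ the support function $kh-\sum_{j\in J}\varphi_j$ is convex on $\Sigma'$, so $\mathrm{vol}(P_{k,J})$ agrees with the polynomial extension of the Minkowski volume. The substantive gap is the second half. The classical mixed-subdivision formula equates $\binom{d+r}{d,1,\ldots,1}V(\Delta_h^{(d)},\Delta_1,\ldots,\Delta_r)$ with the sum of the \emph{$(d{+}r)$-dimensional} volumes of the type-$(d,1,\ldots,1)$ mixed cells, not with any $d$-dimensional measure. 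For a mixed cell $G+e_1+\cdots+e_r$ one has
\[
\mathrm{vol}_{d+r}\bigl(G+\textstyle\sum_i e_i\bigr)=\mathrm{vol}_d(G)\cdot\bigl|\det\bigl(e_1,\ldots,e_r \bmod \mathrm{span}(G)\bigr)\bigr|,
\]
so matching with $\mu(A_h)=\sum_G\mathrm{vol}_d(G)$ requires both a bijection between facets of $A_h$ and mixed cells \emph{and} that every such edge determinant equals $1$. Neither follows from a generic coherent mixed subdivision; it rests on the unimodularity of $\Sigma'$ (\pref{cd:add}) together with the partition $\partial\Delta^\ast\cap N=\bigsqcup_i(\nabla_i\cap N\setminus\{0\})$, which forces the $r$ ray generators of the normal cone of each facet $G\in\scrP_A$ to be distributed exactly one to each $\nabla_i$. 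This is real content, not bookkeeping, and you have not supplied it.

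The paper sidesteps mixed volumes entirely. After the same inclusion--exclusion, it uses that $\scX_t^j$ is linearly equivalent to $\sum_{n\in\nabla_j\cap N\setminus\{0\}}D_n$, so the alternating sum counts precisely those $m\in k\Delta_h\cap M$ for which, for every $j$, the monomial section $s^m$ fails to vanish on some $D_n$ with $n\in\nabla_j\cap N\setminus\{0\}$. That non-vanishing condition reads $kh(n)+\langle m,n\rangle=0$, which is exactly the defining condition for $m\in A_{kh}=kA_h$; hence $h^0(\scX_t,kL_h)=\#(kA_h\cap M)$ on the nose, and Ehrhart asymptotics on each facet of $A_h$ yields $\mu(A_h)$ directly. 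This is shorter and avoids the volume comparison you correctly identify as the main obstacle.
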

\begin{proof}
In \pref{lm:claim-dim}, if $I =\lc 1, \cdots, r \rc$ and $L'$ is the trivial line bundle, then \eqref{eq:claim} is
\begin{align}
    h^0 \lb \scX_t, kL_h \rb
        &=
        \sum_{J \subset \lc 1, \cdots, r \rc} (-1)^{|J|} \cdot h^0 \lb X_{\Sigma'}, kL_h \otimes \scO \lb-\sum_{j \in J} \scX_t^j \rb\rb\\ \label{eq:h0kL}
        &=h^0 \lb X_{\Sigma'}, kL_h \rb
        -
        \sum_{\emptyset \neq J \subset \lc 1, \cdots, r \rc} (-1)^{|J|-1} \cdot h^0 \lb X_{\Sigma'}, kL_h \otimes \scO \lb-\sum_{j \in J} \scX_t^j \rb\rb.
\end{align}
Since the fan $\Sigma'$ is unimodular by \pref{cd:add} and the fan polytope of $\Sigma'$ is $\Delta^\ast$, the set of the integral generators of all the $1$-dimensional cones in $\Sigma'$ coincides with the set $\partial \Delta^\ast \cap N$.
For each $n \in \partial \Delta^\ast \cap N$, we write the toric divisor on $X_{\Sigma'}$ corresponding to the cone $\bR_{\geq 0} \cdot n \in \Sigma'$ as $D_n$.
Then for $j \in \lc 1, \cdots, r \rc$, the divisor $\scX_t^j$ is linearly equivalent to 
\begin{align}\label{eq:divisor}
    \sum_{n \in \partial \Delta^\ast \cap N} \varphi_j (n) \cdot D_n,
\end{align}
where $\varphi_j \colon N_\bR \to \bR$ is the piecewise linear function corresponding to the polytope $\Delta_i$, which appeared in the beginning of \pref{sc:setting}.
We have $\varphi=\sum_{i=1}^r \varphi_i=1$ on $\partial \Delta^\ast$, and $ \varphi_i \geq 0$ on $N_\bR$.
We also know
\begin{align}
\nabla_i \cap \partial \Delta^\ast&=\lc n \in \partial \Delta^\ast \relmid \varphi_i(n)=1 \rc\\
\partial \Delta^\ast \cap N&=\bigsqcup_{i=1}^r \lb \nabla_i \cap N \setminus \lc 0 \rc \rb
\end{align}
\cite[Proposition 3.19, Corollary 3.23]{MR2405763}.
From these, we can see that \eqref{eq:divisor} is equal to
\begin{align}\label{eq:divisor2}
    \sum_{n \in \nabla_j \cap N \setminus \lc 0 \rc} D_n,
\end{align}
and
\begin{align}\label{eq:h0kLX}
    H^0 \lb X_{\Sigma'}, kL_h \otimes \scO \lb-\sum_{j \in J} \scX_t^j \rb\rb
    =
    H^0 \lb X_{\Sigma'}, kL_h \otimes \scO \lb-\sum_{j \in J} \sum_{n \in \nabla_j \cap N \setminus \lc 0 \rc} D_n \rb\rb.
\end{align}
The vector space $H^0 \lb X_{\Sigma'}, kL_h \rb$ is spanned by monomial sections that correspond to the lattice points in the polytope $k\Delta_h$, and \eqref{eq:h0kLX} is the subspace of $H^0 \lb X_{\Sigma'}, kL_h \rb$, which is spanned by monomial sections vanishing on all $D_n$ with $n \in \nabla_j \cap N \setminus \lc 0 \rc, j \in J$.

We use the inclusion–exclusion principle which claims as follows:

\begin{theorem}{\rm(cf.~e.g.~\cite[Theorem 6.1]{MR1797778})}\label{th:i-e}
For finite sets $A_1, \cdots, A_r$, one has
\begin{align}
    \# \lb \bigcup_{i=1}^r A_i \rb
    =\sum_{\emptyset \neq J \subset \lc 1, \cdots, r\rc} (-1)^{\# J-1} 
    \cdot \# \lb \bigcap_{j \in J} A_j \rb.
\end{align}
\end{theorem}

Let  $A_j$ $(j \in \lc 1, \cdots, r\rc)$ be the set of monomial sections of $kL_h$ vanishing on all $D_n$ with $n \in \nabla_j \cap N \setminus \lc 0 \rc$, and apply \pref{th:i-e}.
Then we see that the latter part of \eqref{eq:h0kL} is the number of monomial sections of $kL_h$ vanishing on all $D_n$ with $n \in \nabla_j \cap N \setminus \lc 0 \rc$, for some $j \in \lc1, \cdots, r \rc$.
Thus we obtain
\begin{equation}
h^0 \lb \scX_t, kL_h \rb= \#\left\{
m \in k\Delta_h \cap M\;\left|\;
  \begin{gathered}
    \forall j \in \lc 1, \cdots, r \rc, \exists n \in \nabla_j \cap N \setminus \lc 0\rc \mathrm{\ such\  that\ } \\  s^m \mathrm{\ does\ not\ vanish\ on\ } D_n 
  \end{gathered}
\right.
\right\},
\end{equation}
where $s^m$ is the monomial section of $kL_h$ corresponding to $m \in k\Delta_h \cap M$.
The section $s^m$ does not vanish on the toric divisor $D_n$ if and only if $k \cdot h(n)+\la m, n \ra =0$.
Hence, we obtain
\begin{align}
    h^0 \lb \scX_t, kL_h \rb
    =\# \lb A_{k \cdot h} \cap M \rb
    =\# \lb k \cdot A_{h} \cap M \rb,
\end{align}
where $A_{k \cdot h}$ is $A_h$ of \eqref{eq:A} with $h$ multiplied by $k$.
Recall that the leading term of the Ehrhart polynomial of a lattice polytope is the affine volume of the polytope (cf.~e.g.~\cite[Corollary 3.20, Section 5.4]{MR3410115}).
By applying this fact to every facet of $A_h$, one gets
\begin{align}\label{eq:lattice-pt}
\# \lb k \cdot A_{h} \cap M \rb
\sim
k^d \cdot \mu(A_h)
\end{align}
for $k \gg 1$.

On the other hand, by the asymptotic Riemann--Roch formula, we have
    \begin{align}
        h^0 \lb \scX_t, k L_h \rb \sim \frac{ (L_h^d) }{d!} k^d
    \end{align}
    for $k \gg 1$.
    By comparing this and \eqref{eq:lattice-pt}, we obtain \eqref{eq:vol-L}.
\end{proof}

Suppose (i) of \pref{th:main} and 
take the semi-positive toric metric $\Phi$ corresponding to the convex function $\phi \in \scP\lb \Delta_h \rb$ of (i).
By \pref{lm:potential-retraction}(1), we can apply \cite[Theorem 1.1]{MR4204708}. 
We can see that for any facet $\tau$ of $\Sk(\scrX)$, one has
\begin{align}
    \MA \lb \Phi \rb |_{\rho_\scrX^{-1} \lb \rint \lb \tau \rb \rb}=d! \MA_\bR \lb \left. \psi \right|_{\rint (\tau)} \rb.
\end{align}
(We regard the right hand side as a measure on $\rho_\scrX^{-1} \lb \rint \lb \tau \rb \rb$ by taking pushforward by the inclusion $\rint \lb \tau \rb \hookrightarrow \rho_\scrX^{-1} \lb \rint \lb \tau \rb \rb$.)
By using \pref{lm:potential-retraction}(2), \eqref{eq:real-MA} and \pref{lm:vol}, we obtain
\begin{align}
    \MA \lb \Phi \rb|_{\rho_\scrX^{-1} \lb \rint \lb \tau \rb \rb}
    &=d! \MA_\bR \lb \left. \phi \right|_{\rint (\tau)} \rb\\
    &=d! \mu \lb A_h \rb \left. \tilde{\nu} \right|_{\rint (\tau)}\\ \label{eq:facet-measure}
    &=(L_h^d) \left. \tilde{\nu} \right|_{\rint (\tau)},
\end{align}
where $\tilde{\nu}$ is the normalized Lebesgue measure on $B_{\check{h}}$ of \eqref{eq:normalized}.
Here, the total mass of the sum of \eqref{eq:facet-measure} over all the facets $\tau$ of $\Sk (\scrX)$ is $(L_h^d) $, and the total mass of $\MA \lb \Phi \rb$ is also $(L_h^d) $ by \cite[Section 2]{MR2244803}.
Since $ \MA \lb \Phi \rb$ is a positive Radon measure on $X^{\an}$, we can see that the interiors of facets of $\Sk (\scrX)$ contain the full measure of $\MA \lb \Phi \rb$.
Thus we obtain
\begin{align}
    \MA \lb \Phi \rb
    =(L_h^d)  \tilde{\nu}
\end{align}
on $X^{\an}$.
Hence, $\Phi$ is the solution to \eqref{eq:NA-MA}, which implies (ii) of \pref{th:main}. 
Conversely, when we suppose (ii) of \pref{th:main}, the convex function $\phi \in \scP\lb \Delta_h \rb$ corresponding to the semi-positive toric metric $\Phi$ of (ii) of \pref{th:main} satisfies \eqref{eq:real-MA} since 
\begin{align}
    \MA_\bR \lb \left. \phi \right|_{\rint (\tau)} \rb
    = \frac{1}{d!}\MA \lb \Phi \rb|_{\rho_\scrX^{-1} \lb \rint \lb \tau \rb \rb}
    =\frac{(L_h^d)}{d!} \left. \tilde{\nu} \right|_{\rint (\tau)}
    = \mu \lb A_h \rb \left. \tilde{\nu} \right|_{\rint (\tau)}
\end{align}
follows from the same argument as above.
We conclude \pref{th:main}(1).

\section*{Acknowledgement}

The first author is supported by NSTC of Taiwan, with grant number 112-2123-M-002-005.
The second author was supported by RIKEN iTHEMS Program.

\bibliographystyle{amsalpha}
\bibliography{bibs}

\end{document}